\newtheorem{theorem}{Theorem}
\newtheorem{lemma}[theorem]{Lemma}
\newtheorem{remark}[theorem]{Remark}
\newtheorem{proposition}[theorem]{Proposition}
\newtheorem{definition}[theorem]{Definition}
\newtheorem{corollary}[theorem]{Corollary}
\newtheorem{example}[theorem]{Example}
\newtheorem{conjecture}[theorem]{Conjecture}
\theoremstyle{definition}
\def\set4{\mathcal I}
\def\tup14{(1,2,3,4)}
\newtheorem*{comm*}{Comment}
\newtheorem*{lemma*}{Lemma}
\newcommand{\R}{\mathbb{R}}
\begin{document}

 \author{Paige Bright}
 \address{Department of Mathematics\\
 Massachusetts Institute of Technology\\
 Cambridge, MA 02142-4307, USA}
 \email{paigeb@mit.edu}

 \author{Caleb Marshall}
\address{Department of Mathematics \\ University of British Columbia, 1984 Mathematics Road \\ Vancouver, BC, Canada V6T 1Z2}
\email{cmarshall@math.ubc.ca}

\keywords{}
\subjclass[2020]{28A75, 28A78}

\date{}

\title[A Continuum Erd\H{o}s-Beck Theorem]{A Continuum Erd\H{o}s-Beck Theorem}

\maketitle

\begin{abstract}
  We prove a version of the Erd\H{o}s--Beck Theorem from discrete geometry for fractal sets in all dimensions. More precisely, let $X\subset \R^n$ Borel and $k \in [0, n-1]$ be an integer. Let $\dim (X \setminus H) = \dim X$ for every $k$-dimensional hyperplane $H \in \mathcal{A}(n,k)$, and let $\mathcal L(X)$ be the set of lines that contain at least two distinct points of $X$. Then, a recent result of Ren shows
  $$
\dim \mathcal{L}(X) \geq \min \{2 \dim X, 2k\}.
  $$
  If we instead have that $X$ is not a subset of any $k$-plane, and
  $$
  0<\inf_{H \in \mathcal{A}(n,k)} \dim (X \setminus H) = t < \dim X,
  $$
  we instead obtain the bound
  $$
\dim \mathcal{L}(X) \geq \dim X + t.
  $$
We then strengthen this lower bound by introducing the notion of the ``trapping number'' of a set, $T(X)$, and obtain
\[
\dim \mathcal L(X) \geq \max\{\dim X + t, \min\{2\dim X, 2(T(X)-1)\}\},
\]
as consequence of our main result and of Ren's result in $\mathbb{R}^n$. Finally, we introduce a conjectured equality for the dimension of the line set $\mathcal{L}(X)$, which would in particular imply our results if proven to be true.
\end{abstract}

\tableofcontents

\section{Introduction}

In discrete geometry, a classic result of Beck \cite{beck1983lattice} shows that given $N$ points in $\R^2$, either $\sim N$ of the points lie on a line, or there are $\sim N^2$ lines spanned by the set, i.e. $\sim N^2$ lines which contain at least two of the points. 

\begin{remark}
    \rm{Here, $\sim$ denotes equality up to some positive multiplicative constant. Similarly, $A\lesssim B$ denotes $A\leq CB$ for some multiplicative constant $C$.}
\end{remark}

About two years ago, this result was generalized to the continuum setting by Orponen--Shmerkin--Wang (OSW, \cite{orponen2022kaufman}), showing that given $X\subset \R^2$ Borel, either 
\begin{enumerate}
    \item[\textbf{1)}] there exists a line $\ell$ such that $\dim (X\setminus \ell) < \dim X$, or 
    \item[\textbf{2)}] $\dim \mathcal L(X) \geq \min \{2\dim X, 2\}$, where $\mathcal L(X)$ is the set of all lines containing at least two points of $X$ contained in the metric measure space $\mathcal A(2,1)$, i.e. the space of all lines in $\R^2$.
\end{enumerate}

\begin{remark}\rm{
We define the dimension of a subset of the affine Grassmannian subordinate to suitable metrics on $\mathcal{A}(n,1)$.
For the sake of exposition, we postpone any formal definition until Section \ref{ss:planarprelims}.}
\end{remark}

The proof of the continuum version of Beck's theorem from Orponen, Shmerkin, and Wang used one of their main results on radial projections proven in the plane, as well as a classical lowerbound on the dimension of a (dual) Furstenberg set (see Section \ref{sec:s1.1}). Recently, Kevin Ren proved a higher dimensional radial projection theorem strengthening OSW's result, and as a consequence, Ren was able to derive an analogous continuum version of Beck's theorem in $\R^n$ (\cite{ren2023discretized}, Theorem 1.1 and Corollary 1.5).

Going back to discrete geometry: there is a more general version of Beck's theorem, called the Erd\H{o}s--Beck theorem. This result shows the following: given $N$ points in the plane, $P$, and a parameter $0\leq t \leq N$, suppose that $$\sup_{L\in \mathcal A(2,1)}|P \cap L| \geq N-t.$$ Then $P$ spans $\gtrsim Nt$ many lines.
This result was proven by Jozsef Beck in \cite{beck1983lattice} and resolved a conjecture of Erd\H{o}s--hence, this result is often referred to in the literature as the Erd\H{o}s-Beck Theorem.

Our main focus of this paper is finding a continuum analogue of the Erd\H{o}s--Beck theorem in $\R^n$. In particular, our main result is the following. In what follows, $\mathcal A(n,k)$ is the affine Grassmannian of $k$ planes in $\R^n$.

\begin{theorem}\label{thm:fullbeck}
    Let $X\subset \R^n$ be Borel, and let $k\in [1,n-1]$ be an integer. Then, 
    \begin{enumerate}
        \item[\textbf{1)}] if $\dim (X\setminus H) = \dim X$ for every $H \in \mathcal A(n,k)$, then $\dim \mathcal L(X) \geq \min\{2\dim X, 2k\}$.
        \item[\textbf{2)}] if there exists an $H \in \mathcal A(n,k)$ such that $\dim (X\setminus H) < \dim X$, we let $0 < t \leq \dim X$ such that $\dim (X\setminus P) \geq t$ for all $P \in \mathcal A(n,k)$. Then,
        \[
        \dim \mathcal L(X) \geq \dim X + t.
        \]
    \end{enumerate}
\end{theorem}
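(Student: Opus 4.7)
The plan: Part~(1) is precisely Ren's higher-dimensional continuum Beck theorem (\cite{ren2023discretized}), so no new work is required there, and the argument below concentrates on Part~(2).

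Write $s=\dim X$. Since $\dim(X\setminus H)<s$ for the distinguished $H\in\mathcal A(n,k)$, the set $X\cap H$ must carry the full dimension $s$. The uniform hypothesis $\dim(X\setminus P)\ge t$ for every $P\in\mathcal A(n,k)$ applied to $P=H$ supplies $\dim(X\setminus H)\ge t$. The strategy is to lower-bound $\dim\mathcal L(X)$ by exhibiting many lines coming from the connecting map
\[
\Phi:(X\cap H)\times (X\setminus H)\to\mathcal L(X),\qquad \Phi(x,y)=\overline{xy}.
\]
Any such line meets $H$ in a single point $x$ and is not parallel to $H$, so the smooth chart
\[
\Theta:H\times \{v\in S^{n-1}:v\not\parallel H\}\to\mathcal A(n,1),\qquad (p,v)\mapsto p+\R v,
\]
satisfies $\Theta^{-1}(\Phi(x,y))=(x,v_{x,y})$ with $v_{x,y}=(y-x)/|y-x|$. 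For fixed $x$, the section of the image of $\Phi$ over $x$ is the radial projection $\pi_x(X\setminus H)\subset S^{n-1}$, and $\Theta$ is bi-Lipschitz on compact sub-domains.

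The heart of the argument is a radial projection estimate from points in the fractal $X\cap H\subset H$: letting $\mu$ be a Frostman measure on $X\cap H$ of exponent close to $s$, we aim to establish
\[
\dim\pi_x(X\setminus H)\ge t \qquad \text{for $\mu$-a.e.\ }x\in X\cap H.
\]
We plan to extract this from (a restricted version of) Ren's higher-dimensional radial projection theorem, with the plane case falling back on the Orponen--Shmerkin--Wang result. The crucial point is that the standing hypothesis $\dim(X\setminus P)\ge t$ for all $k$-planes $P$ is precisely the non-concentration condition on $X\setminus H$ that lets the radial projection theorem preserve dimension up to $t$. Granted this pointwise bound, a Marstrand-type slicing lower bound for Hausdorff dimension yields
\[
\dim\bigl\{(x,v):x\in X\cap H,\ v\in\pi_x(X\setminus H)\bigr\}\ \ge\ s+t,
\]
and pulling this set back through the bi-Lipschitz chart $\Theta$ produces $\dim\mathcal L(X)\ge s+t=\dim X+t$.

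The main obstacle is bridging the gap between the qualitative non-concentration hypothesis $\dim(X\setminus P)\ge t$ and the quantitative input required by the available radial-projection machinery, typically a Frostman-type bound on $\delta$-neighborhoods of all $k$-planes. We expect to handle this by pigeonholing scales and passing to Frostman subsets of $X\setminus H$, which is standard if delicate. Further care is needed because the source of the radial projection is constrained to lie in the lower-dimensional plane $H$, so the relevant statement is a \emph{restricted} radial-projection theorem rather than the generic-source version. A secondary technicality is that $\Theta$ omits lines parallel to $H$; however, by construction every line in the image of $\Phi$ crosses $H$ transversely in exactly one point, so this is harmless.
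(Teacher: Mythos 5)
Your Part (1) matches the paper (both simply cite Ren), but Part (2) has a genuine gap, and it comes from the direction in which you radially project. You project from pins $x\in X\cap H$ onto the target $X\setminus H$ and need $\dim\pi_x(X\setminus H)\ge t$ for $\mu$-a.e.\ $x$. This is exactly the step you defer to a ``restricted version'' of the radial projection theorems, but no such result is available here: the pin set $X\cap H$ is entirely contained in the $k$-plane $H$, which is precisely the degenerate configuration that the Orponen--Shmerkin--Wang and Ren radial projection theorems exclude (their non-concentration hypotheses are placed on the pin measure, and a measure supported on a $k$-plane violates them maximally). Worse, the standing hypothesis $\dim(X\setminus P)\ge t$ for all $k$-planes $P$ is a condition on $X$, not on $X\setminus H$: the target $X\setminus H$ may itself be contained in a line or a $k$-plane (its complement in $X$ being large only because of $X\cap H$), so it carries no usable non-concentration information either. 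Thus the ``main obstacle'' you flag is not a technicality to be pigeonholed away; as written, the key estimate is simply not supplied by the machinery you invoke. (Your slicing step and the chart $\Theta$ are fine; the problem is upstream.)

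The fix is to reverse the roles, which is what the paper does. Since $\dim(X\setminus H)<\dim X=:s$, one has $\dim(X\cap H)=s$. For any pin $x\in X\setminus H$, the radial projection $\pi_x$ restricted to the plane $H$ is locally bi-Lipschitz (Lemma \ref{thm:kplanelemma}), so $\dim\pi_x(X\setminus\{x\})\ge\dim\pi_x(X\cap H)=s$ with no radial projection theorem at all; by Lemma \ref{thm:linefamdim} the pencil $\mathcal L_x\subset\mathcal L(X)$ then has dimension at least $s$. The family $\bigcup_{x\in X\setminus H}\mathcal L_x$ is therefore a dual $(s,t)$-Furstenberg set of lines (pins $X\setminus H$ of dimension at least $t$, an $s$-dimensional line family through each pin), and Theorem \ref{thm:higherdimFurstenberg} gives $\dim\mathcal L(X)\ge s+\min\{s,t\}=s+t$ since $t\le s$. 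Note that some Furstenberg-type input is unavoidable in this direction: pencils through distinct pins of $X\setminus H$ can share lines, so the clean product/slicing argument you propose over $X\cap H$ does not transfer to a fibering over $X\setminus H$.
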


Firstly, note that if we are in the second case of the above theorem, and the only value of $t$ such that $\dim X\setminus P \geq t$ for all $P\in \mathcal A(n,k)$ is $t=0$, then one can apply this same theorem with $n=k$ and $X' = X\cap H$. Furthermore, note that, if $\dim X > k$ for some $k$, then Condition $(1)$ of Theorem \ref{thm:fullbeck} necessarily holds. 
Hence, we always have the bound
\begin{equation}\label{eq:trivialbeck}
\dim \mathcal{L} (X) \geq \max\{ 2 (\lfloor \dim X \rfloor - 1), 0 \} \quad \forall X \subset \mathbb{R}^n.
\end{equation}
However, for sets satisfying Condition $(2)$, we may improve this bound by examining the interaction between the sts $X_1 : = X \cap H$ and $X_2 : = X \setminus H$, where $H \in \mathcal{A}(n,k)$ satisfies $\dim (X \setminus H) < \dim X$. We will also further strengthen the lower bound of Theorem \ref{thm:fullbeck} by defining the notion of a ``trapping number'' of a set--a concept related to the estimate \eqref{eq:trivialbeck} appearing above.

\begin{remark}
    \rm{We view our result as a natural continuum analogoue of the Erd\H{o}s-Beck Theorem, with cardinality replaced by Hausdorff dimension. In particular, our estimate gives a lower bound on the dimension of the line set $\mathcal{L}(X)$ as a function of the dimensions of $X$ and its largest dimensional co-planar subset.}
\end{remark} 

Initially, the reader may be surprised that the Erd\H{o}s-Beck bound of $Nt$ becomes a sum bound of $\dim X + t$ in our result. However, such ``logarithmic'' phenomenon occur frequently in these kind of discrete-to-continuum Theorems. Some examples in this vein include partial results towards an Erd\H{o}s-Beck Theorem in the Continuum as in \cite{orponen2022kaufman} and \cite{ren2023discretized}, as well as further afield analogues such as the Kakeya problem in finite fields, where the (conjectured) dimensional estimate of $\dim K = n$ in $\mathbb{R}^n$ becomes a cardinality estimate of $\# K \gtrsim |\mathbb{F}_q|^n$ in the finite field $\mathbb{F}_q$. See, for example, \cite{dvir2009kakeya}.

Let us now discuss how Theorem \ref{thm:fullbeck} can be strengthened by considering the following definition.

\begin{definition}
    Let $X\subset \R^n$ be Borel with $\dim (X) > 0$, and define the \textbf{trapping number} of $X$ to be 
    \[
    T(X) := \begin{cases}\min \{k \in \mathbb{N}_{+} : \text{there exists an } H\in \mathcal A(n,k) \text{ such that } \dim (X\setminus H) < \dim X\} \\
    1 ~~\text{if no such $k$ exists}
    \end{cases}.
    \]
\end{definition}

One can intuitively think of the trapping number of a set $X$ as a way to recognize if (a large proportion of) $X$ is contained in some lower dimensional $k$-plane. Notice then that Ren's continuum Beck's theorem \cite{ren2023discretized} proves the following:
\begin{theorem}[\cite{ren2023discretized}, Corollary 1.5]\label{thm:rensBeck}
    Let $X\subset \R^n$ be Borel. Then,
    \[
    \dim \mathcal L(X) \geq \min\{2\dim X, 2(T(X)-1)\}.
    \]
\end{theorem}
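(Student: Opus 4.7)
The plan is to deduce Theorem \ref{thm:rensBeck} directly from Part 1 of Theorem \ref{thm:fullbeck}, which is itself the continuum Beck theorem of Ren in $\R^n$. The trapping number $T(X)$ is designed precisely to encode the optimal dimension at which the hypothesis of Part 1 is satisfied, so the entire argument reduces to unpacking the definition of $T(X)$ and invoking Part 1 once at the correct parameter.

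When $T(X) = 1$, the claimed bound reduces to $\min\{2\dim X, 0\} = 0$ and is trivial. So assume $T(X) = m \geq 2$. Since $\mathcal{A}(n,n) = \{\R^n\}$ and $\dim(X \setminus \R^n) = 0 < \dim X$, the value $k = n$ always witnesses the condition in the definition of $T(X)$; hence $m \leq n$, and $k := m - 1$ lies in $[1, n-1]$ and is therefore admissible for Part 1 of Theorem \ref{thm:fullbeck}.

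To apply Part 1 at this value $k = m-1$, we need to confirm that $\dim(X \setminus H) = \dim X$ for every $H \in \mathcal{A}(n, m-1)$. This is exactly the minimality clause in the definition of $T(X) = m$: no $(m-1)$-plane can cut $X$ down in dimension, for otherwise $T(X) \leq m - 1$, contradicting our assumption. Part 1 of Theorem \ref{thm:fullbeck} then yields
\[
\dim \mathcal{L}(X) \geq \min\{2\dim X, 2(m-1)\} = \min\{2\dim X, 2(T(X)-1)\},
\]
as claimed. The only substantive ingredient is Ren's continuum Beck theorem; no additional projection or Furstenberg-type estimates are required, since the trapping number is effectively a relabeling of the hypothesis of Part 1. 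The only conceptual subtlety is that one must ``step down'' by one and apply Part 1 at $k = T(X) - 1$ rather than at $k = T(X)$, which reflects the fact that $T(X)$ records the first dimension at which the trapping phenomenon occurs, while Part 1 exploits the last dimension at which it does not.
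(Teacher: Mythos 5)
Your proposal is correct and matches the paper's treatment: the paper presents this statement as Ren's Corollary 1.5 together with a remark that the case $T(X)=1$ is trivial, and its content is exactly that Ren's continuum Beck theorem (Part 1 of Theorem \ref{thm:fullbeck}) applies at $k = T(X)-1$ because minimality of the trapping number forces $\dim(X\setminus H)=\dim X$ for every $H\in\mathcal{A}(n,T(X)-1)$. Your write-up simply makes this translation explicit, so there is nothing to add.
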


\begin{remark}
    \rm{This was technically proven when $T(X) \geq 2$, though the result is trivial when $T(X) = 1$, so we include it for completeness.}
\end{remark}

Comparing this to our main result, Theorem \ref{thm:fullbeck}, we obtain the following: 

\begin{corollary}\label{cor:fullbeck+Ren}
    Let $X\subset \R^n$ be Borel and let $k\in [0,n-1]$ be an integer. Then, 
    \begin{enumerate}
        \item[\textbf{1)}] if $\dim (X\setminus H) = \dim X$ for every $H \in \mathcal A(n,k)$, then $\dim \mathcal L(X) \geq \min\{2\dim X, 2k\}$.
        \item[\textbf{2)}] if there exists an $H \in \mathcal A(n,k)$ such that $\dim (X\setminus H) < \dim X$, we let $0 < t \leq \dim X$ such that $\dim (X\setminus P) \geq t$ for all $P \in \mathcal A(n,k)$. Then,
        \[
        \dim \mathcal L(X) \geq \max\{\dim X + t, \min\{2\dim X,2(T(X)-1)\}\}.
        \]
    \end{enumerate}
\end{corollary}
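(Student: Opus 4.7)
The proposed proof is a direct combination of Theorem \ref{thm:fullbeck} with Ren's Theorem \ref{thm:rensBeck}: both results furnish unconditional lower bounds for $\dim \mathcal{L}(X)$, and the desired estimate is the pointwise maximum. The plan is therefore to verify that both lower bounds genuinely apply to the configurations considered in Corollary \ref{cor:fullbeck+Ren}, and then to take the maximum.

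First I would address the minor discrepancy in the hypotheses: Corollary \ref{cor:fullbeck+Ren} allows $k \in [0, n-1]$, whereas Theorem \ref{thm:fullbeck} requires $k \geq 1$. The case $k = 0$ is trivial, since $\mathcal{A}(n,0)$ consists of singletons; Condition (1) then yields the vacuous bound $\min\{2 \dim X, 0\} = 0$, and Condition (2) cannot arise nontrivially because removing a single point from a Borel set $X$ does not decrease its Hausdorff dimension. For $k \in [1, n-1]$, Case (1) of the corollary is literally the conclusion of Theorem \ref{thm:fullbeck}(1), so there is nothing further to do in that case.

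For Case (2), Theorem \ref{thm:fullbeck}(2) yields the first lower bound $\dim \mathcal{L}(X) \geq \dim X + t$. For the second, I would observe that the hypothesis of Case (2) guarantees the existence of some $H \in \mathcal{A}(n,k)$ with $\dim(X \setminus H) < \dim X$, so by definition of the trapping number $T(X) \leq k$; in particular, $T(X)$ is finite and the trivial case $T(X) = 1$ is not invoked vacuously. Applying Theorem \ref{thm:rensBeck} then produces the second lower bound
\[
\dim \mathcal{L}(X) \geq \min\{2\dim X,\, 2(T(X)-1)\}.
\]
Taking the maximum of the two bounds completes the proof.

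There is no genuine analytic obstacle here, since both estimates hold simultaneously and independently; the entire content of the corollary is the observation that Ren's bound and the bound produced by Theorem \ref{thm:fullbeck}(2) can be invoked in tandem under the hypotheses of Case (2). The only conceptual point worth emphasizing is that the two bounds are incomparable in general: when $t$ is close to $\dim X$, the new bound $\dim X + t$ dominates, while when $T(X) - 1$ is much larger than $(\dim X + t)/2$, Ren's bound is superior. Consequently, taking the maximum does produce a strictly stronger estimate than either theorem alone.
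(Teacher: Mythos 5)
Your proposal is correct and matches the paper's own (one-line) proof: the corollary is obtained by simply invoking Theorem \ref{thm:fullbeck} and Theorem \ref{thm:rensBeck} in tandem and taking the maximum of the two lower bounds. Your additional remarks on the $k=0$ case and on the applicability of Ren's bound are sound but not part of the paper's argument.
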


\subsection{Examples and a Conjecture} \label{sec:trappingnumber}

We illustrate the relationship between the parameters $k$, $t$, $T(X)$, and the dimension of the set $X$ through examples. 

Firstly, notice that, since $\dim (X \setminus H) \leq \dim X$ for all $k$-planes $H$, we always have that
$$
\dim X + \inf_{H \in \mathcal{A}(n,k)} \dim (X \setminus H) \leq 2 \dim X, \quad \forall 1 \leq k \leq n.
$$
Hence, the term $\dim X + t$ appearing in Corollary \ref{cor:fullbeck+Ren} dominates precisely when $X \subset \mathbb{R}^n$ satisfies the inequality
\begin{equation}\label{eq:BMtermdoms}
2 (T(X) - 1) \leq \dim X + \inf_{H \in \mathcal{A}(n,k)} \dim (X \setminus H) \leq 2 \dim X
\end{equation}
where $\dim X$ satisfies $\dim X \leq k$. The following example furnishes a set $X \subset \mathbb{R}^n$ which satisfies the inequalities \eqref{eq:BMtermdoms} (so Corollary \ref{cor:fullbeck+Ren} obtains a lower bound of $\dim X + t$).

\begin{example} \rm{Let $1 \leq k \leq n-1$ be given and suppose that $X \subset \mathbb{R}^n$ Borel can be written as $X = X_1 \cup X_2$, where each $X_j$ is contained in some distinct $k$-plane $H_j \in \mathcal{A}(n,k)$ and satisfies $k - 1 < \dim X_1 < \dim X_2 = \dim X \leq k$. Then, since
$$
\dim (X \setminus H_2) = \dim (X_1) < \dim (X),
$$
we know that $T(X) \leq k$. However, for any $k-1$ plane $H' \in \mathcal{A}(n, k-1)$, we know that
$$
\dim (X_2 \setminus H') = \dim (X_2) = \dim (X).
$$
Hence, $T(X) \geq k$. So, one knows that:
$$
\min \{ 2 \dim X, 2(T(X) - 1) \} \leq 2k - 2.
$$
However, since
$$
\inf_{H \in \mathcal{A}(n,k)} \dim (X \setminus H ) = \dim (X \setminus H_2) = \dim (X_1),
$$
we have, setting $t = \dim X_1$,
$$
\dim X + t > (k - 1) + (k-1) = 2k - 2
$$
Hence, the bound $\dim X + t$ of Corollary \ref{cor:fullbeck+Ren} dominates in this scenario.}
\end{example}

We now provide some intuition for how the parameters $t$ and $T(X)$ come into play for sets with varying geometric structure, and give an example of a set where the term $\min \{2 \dim X, 2 (T(X) - 1) \}$ does, in fact, dominate $\dim X + t$.

\begin{example}
    \rm{We work in $\mathbb{R}^n$ and suppose we are given some integer $1 \leq k \leq n-1$ and $0 < \beta \leq 1$. In this example, we construct a set $X \subset \mathbb{R}^n$ with dimension $\dim X = k - 1 + \beta$ and trapping dimension $T(X) = k + 1$. We then provide a lower bound for $\dim \mathcal{L}(X)$ from Corollary \ref{cor:fullbeck+Ren} and determine what happens as $\beta \rightarrow 1$.

    \medskip
    
    To this end, suppose that $S \subset \mathbb{R}^n$ is an embedding of the $k$-sphere $\mathbb{S}^k$ in $\mathbb{R}^n$ (which, of course, has dimension $k$). We then take a subset $X \subset S$ which satisfies $\dim X = k - 1 + \beta$. Now, we know that
    $$
    \dim (S \cap H) \leq k - 1, \quad \forall H \in \mathcal{A}(n,k) \text{ intersecting $S$}.
    $$
    Hence, for every $H \in \mathcal{A}(n,k)$, we have $\dim (X \setminus H) = \dim X$. So, the trapping number of $X$ satisfies $T(X) \geq k + 1$. However, the $k$-sphere $S$ is itself contained in some $k+1$ hyperplane, which guarantees that $T(X) \leq k + 1$. So, $X$ has trapping number $T(X) = k + 1$. We then have that
    $$
    T(X) - 1 = k \geq k - 1 + \beta = \dim X.
    $$
    This, in turn, implies that $\min \{2 \dim X, 2 (T(X) - 1) \} = 2 \dim X$, so that
    $$
    \dim \mathcal{L}(X) \geq \max\{\dim X+ t, 2\dim X\} =  2 \dim X.
    $$
    A similar result is, of course, given by Theorem \ref{thm:fullbeck}, since we may take $t = \dim X$. However, we note that
    $$
    \dim X = k - 1 + \beta \rightarrow 2 k \textrm{ as } \beta \rightarrow 1.
    $$
    In particular, whenever $X$ is a full-dimensional subset of $S$, we see that
    $$
    \dim \mathcal L(X) \geq 2 \dim X = 2 (T(X) - 1) = 2k.
    $$
    In the special case where $k = n - 1$, this inequality is an equality, since one necessarily has $\dim \mathcal{L}(X) \leq 2 (n - 1)$ for any $X \subset \mathbb{R}^n$.
    }
\end{example}

The last example demonstrates a set for which the lower bound of Corollary \ref{cor:fullbeck+Ren} is, in fact, an equality. However, we do not expect our bound to be sharp for all Borel sets $X$. Instead, we conjecture the following to be true.

\begin{conjecture}\label{conj:productlines}
Let $X \subset \mathbb{R}^n$ and let $T(X) = T$ be the trapping number of $X$. Suppose that $H \in \mathcal{A}(n, T)$ is the unique hyperplane such that
$
\dim (X \setminus H) < \dim X,
$
and set $X_1 = X \cap H$ and $X_2 = X \setminus H$. Then, one has
\begin{equation}\label{eq:conjecture}
\dim \mathcal{L}(X) = \max \{ \dim (X_1 \times X_2), \min \{\dim (X_1 \times X_1), 2(T - 1)\} \}
\end{equation}
\end{conjecture}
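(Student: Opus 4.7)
The approach is to establish Conjecture \ref{conj:productlines} as two matching inequalities. The upper bound should follow from a careful case analysis on where the two witnessing points of a line in $\mathcal{L}(X)$ come from (both in $X_1$, both in $X_2$, or one of each). The lower bound, by contrast, requires product-dimension strengthenings of both Theorem \ref{thm:fullbeck} and Theorem \ref{thm:rensBeck}.

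For the upper bound, every line in $\mathcal{L}(X)$ lies in the image of the span map $\mathfrak{s}\colon (X\times X)\setminus \Delta \to \mathcal{A}(n,1)$, and we decompose the domain using $X = X_1 \sqcup X_2$:
\[
X\times X = (X_1\times X_1)\sqcup (X_1\times X_2)\sqcup (X_2\times X_1)\sqcup (X_2\times X_2).
\]
The image $\mathfrak{s}(X_1\times X_1)$ is contained in the affine Grassmannian $\mathcal{A}(T,1)$ of lines inside $H$, which has dimension $2(T-1)$; together with the trivial parametrization bound, this contributes at most $\min\{\dim(X_1\times X_1),2(T-1)\}$. Lines of the form $\mathfrak{s}(x_1,x_2)$ with $x_2\notin H$ are transversal to $H$, and since $L\cap X_1 \subseteq L\cap H$ is a single point, the span map is essentially injective on $X_1\times X_2$, yielding $\dim \mathfrak{s}(X_1\times X_2)\leq \dim(X_1\times X_2)$. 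The delicate case is $\mathfrak{s}(X_2\times X_2)$, which one must absorb into the maximum: the plan is to invoke the standard product inequality $\dim(X_2\times X_2)\leq \dim X_2 + \dim_P X_2$ together with the uniqueness of $H$, perhaps via a recursive application of the conjecture to $X_2$.

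For the lower bound, two essentially independent arguments are needed. To obtain $\dim \mathcal{L}(X) \geq \dim(X_1\times X_2)$, one would strengthen Theorem \ref{thm:fullbeck} from the sum bound $\dim X_1 + \dim X_2$ to the product bound. A natural route is a slicing argument: for fixed $x_2 \in X_2$, radial projection $\pi_{x_2}$ from $x_2$ sends $X_1$ to the lines through $x_2$, and since $x_2 \notin H$ the dimensional defect on $X_1$ is controlled by the projection theorems underlying Theorem \ref{thm:fullbeck} and Theorem \ref{thm:rensBeck}. Integrating this family of projections against a Frostman measure on $X_2$, and coupling the two slicings in a Marstrand-type way, should give the product bound rather than merely the sum. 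An entirely analogous argument inside the ambient space $H\cong \mathbb{R}^T$ would yield $\dim \mathcal{L}(X)\geq \min\{\dim(X_1\times X_1),2(T-1)\}$, since $X_1\subseteq H$ itself has trapping number $T$ within $H$ by the uniqueness of the trapping plane for $X$.

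The principal obstacle is the upgrade from sum- to product-dimension in the lower bounds. In general $\dim(A\times B)$ may strictly exceed $\dim A + \dim B$, and capturing this extra gain for the spanned line set requires a refined projection estimate sensitive to the product structure, not merely to the marginal Frostman exponents. A plausible route is a two-parameter radial projection theorem generalizing the work of Ren, using a refined Shmerkin-type slicing to couple the Frostman exponents of $X_1$ and $X_2$ in the relevant energy integral. The secondary obstacle, controlling $\mathfrak{s}(X_2\times X_2)$ on the upper-bound side, appears more tractable and should follow either by a recursive bootstrapping of the conjecture applied to $X_2$ or from a direct product-dimension estimate once the uniqueness of $H$ is exploited.
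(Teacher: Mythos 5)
This statement is a \emph{conjecture} in the paper: the authors do not prove it, and offer only a heuristic decomposition $\mathcal{L}(X) = \mathcal{L}(X_1) \cup \mathcal{L}(X_1, X_2) \cup \mathcal{L}(X_2)$ together with a single planar example (Example \ref{ex:conjsharpness}) where the conjectured equality is verified via point-line duality. Your proposal follows essentially the same decomposition as the paper's motivating discussion, but it is a programme rather than a proof, and you are candid about this. The decisive gap is exactly the one you flag: the lower bound $\dim \mathcal{L}(X_1, X_2) \geq \dim (X_1 \times X_2)$. All of the machinery actually available --- the dual Furstenberg estimates of Theorem \ref{thm:higherdimFurstenberg} and the radial projection theorems behind Theorem \ref{thm:rensBeck} --- is stated in terms of the marginal Frostman exponents $s = \dim X_1$ and $t = \dim X_2$, and so can yield at best $\dim X_1 + \dim X_2$, which by \eqref{eq:prodboundmattila} can be strictly smaller than $\dim (X_1 \times X_2)$. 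The "two-parameter radial projection theorem" you invoke to capture the excess $\dim(X_1 \times X_2) - (\dim X_1 + \dim X_2)$ does not exist in the literature cited here, and constructing it is the whole content of the conjecture; a Frostman measure on $X_2$ sees only $\dim X_2$ and is blind to the joint structure that makes the product dimension exceed the sum.

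There are two further soft spots. On the upper-bound side, your treatment of $\mathfrak{s}(X_2 \times X_2)$ is circular as stated: "recursive application of the conjecture to $X_2$" assumes the conjecture, and the packing-dimension bound $\dim(X_2 \times X_2) \leq \dim X_2 + \dim_P X_2$ need not be dominated by the right-hand side of \eqref{eq:conjecture} (note the conjectured formula contains no $X_2 \times X_2$ term at all, so \emph{some} argument specific to the structure of $X_2 = X \setminus H$ is required, and neither you nor the paper supplies one). On the lower-bound side, your claim that $X_1$ "has trapping number $T$ within $H$" does not follow from the uniqueness of $H$ for $X$; a priori $X_1$ could concentrate on a lower-dimensional plane inside $H$ even when $X$ does not, so the appeal to the analogue of Theorem \ref{thm:rensBeck} inside $H \cong \mathbb{R}^T$ needs a separate hypothesis or argument. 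In short: your sketch correctly identifies the architecture the authors have in mind, but none of the three nontrivial inequalities is actually established, and the central one is an open problem.
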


The spirit behind Conjecture \ref{conj:productlines} is the following. For $X = X_1 \sqcup X_2$ in $\mathbb{R}^n$ let $\mathcal{L}(X_1, X_2)$ denote the family of affine lines connecting pairs of points between $X_1$ and $X_2$. Then, we can write
$$
\mathcal{L}(X) : = \mathcal{L} (X_1) \cup \mathcal{L}(X_1, X_2) \cup \mathcal{L}(X_2).
$$
So, if one had the equalities
$$
\dim \mathcal{L}(X_1, X_2) = \dim (X_1 \times X_2), \quad \dim \mathcal{L}(X_1) = \dim (X_1 \times X_1),
$$
then this would prove Conjecture \ref{conj:productlines}. In particular, since:
$$
\dim (X_1 \times X_2) \geq \dim X_1 + \dim X_2 = \dim X + \inf_{H \in \mathcal{A}(n,T)} \dim (X \setminus H),
$$
$$
\dim (X_1 \times X_1) \geq 2 \dim X_1 = 2 \dim X,
$$
we see that Conjecture \ref{conj:productlines} in fact implies the lower bound of our main result, which is Theorem \ref{thm:fullbeck}.

\begin{remark}
    \rm{We note that \textit{bilinear} line set estimates of the form $\mathcal L(X,Y)$ can be studied in a similar manner, which is a direction we hope to pursue in future work.}
\end{remark}

To help motivate Conjecture \ref{conj:productlines} and demonstrate that it supersedes Theorem \ref{thm:fullbeck} for certain sets, we recall the following result concerning the dimension of product sets (see \cite[Theorem 8.10]{mattila1999} for a reference).

\begin{theorem}
    Let $A \subset \mathbb{R}^n$ and $B \subset \mathbb{R}^m$ be non-empty Borel sets. Then
    \begin{equation}\label{eq:prodboundmattila}
    \dim (A \times B) \geq \dim A + \dim B.
    \end{equation}
\end{theorem}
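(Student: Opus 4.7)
The plan is to use Frostman's lemma together with a product measure argument. Recall that Frostman's lemma asserts that for any Borel set $E \subset \R^N$ with $\dim E > s$, there exists a non-trivial compactly supported Borel measure $\mu$ on $E$ satisfying the Frostman condition
\[
\mu(B(x,r)) \leq C r^s, \quad \forall x \in \R^N, \; r > 0,
\]
and conversely, the existence of such a measure implies $\dim E \geq s$ via the mass distribution principle.

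Given $s < \dim A$ and $t < \dim B$, I would first invoke Frostman's lemma to produce measures $\mu$ on $A$ and $\nu$ on $B$ satisfying the above conditions with exponents $s$ and $t$ and constants $C_1, C_2$, respectively. I would then consider the product measure $\mu \times \nu$, a non-trivial Borel measure supported on $A \times B \subset \R^{n+m}$. The crucial observation is that any Euclidean ball $B((x,y), r) \subset \R^{n+m}$ is contained in the Cartesian product of balls $B(x,r) \times B(y,r)$ (the product of a ball in $\R^n$ and a ball in $\R^m$). Hence by Fubini,
\[
(\mu \times \nu)\bigl(B((x,y), r)\bigr) \leq \mu(B(x,r)) \cdot \nu(B(y,r)) \leq C_1 C_2 \, r^{s+t}.
\]
This verifies that $\mu \times \nu$ is a Frostman measure of exponent $s + t$ supported on $A \times B$, so the mass distribution principle yields $\dim (A \times B) \geq s + t$.

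To conclude, I would take the supremum over all $s < \dim A$ and $t < \dim B$, obtaining the claimed inequality \eqref{eq:prodboundmattila}. The argument is entirely elementary and there is no genuine obstacle. The only point requiring any care is the applicability of Frostman's lemma, which relies on $A$ and $B$ being Borel (or, more generally, analytic); this is guaranteed by hypothesis. The same product-Frostman trick underlies many product-type dimension estimates in geometric measure theory, and it is worth noting that equality need not hold in \eqref{eq:prodboundmattila}, which is exactly why Conjecture \ref{conj:productlines} offers a strictly stronger statement than Theorem \ref{thm:fullbeck} in general.
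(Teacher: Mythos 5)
Your argument is correct and is precisely the standard proof of this inequality: the paper does not prove the statement itself but cites it as Theorem 8.10 of Mattila's book, where the argument is exactly this Frostman--product-measure computation combined with the mass distribution principle. The only (trivial) point worth noting is the degenerate case $\dim A = 0$ or $\dim B = 0$, where no admissible $s$ or $t$ exists; there the inequality follows immediately from monotonicity, since $A \times B$ contains a bi-Lipschitz copy of each factor.
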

Many product sets $K = A \times B$ attain \eqref{eq:prodboundmattila} with equality--for example, when $A$ has equal Hausdorff and packing dimensions. Yet, the inequality \eqref{eq:prodboundmattila} can also be strict. Such product sets are precisely those which motivate Conjecture \ref{conj:productlines}. We give an example in $\mathbb{R}^2$ below, where point-line duality arguments allow us to easily calculate the dimension of $\mathcal{L}(X_i, X_j)$ in terms of the dimension of the product set $X_i \times X_j$ for $i,j = 1,2$.

We now construct a set $E \subset \mathbb{R}^2$ with $\dim (E) = 0$ such that $\dim \mathcal{L}(E) = 1$. Moreover, our set $E$ has the property that $E = E_1 \sqcup E_2$ where each $E_i$ is contained in some line $\ell_i$ and $\mathcal{L} (E) = \dim (E_1 \times E_2) = 1$. In particular, then, this example demonstrates a set $E$ which simultaneously sharpens the conjectured inequality \eqref{eq:conjecture} while also departing from the lower bound of Corollary \ref{cor:fullbeck+Ren}.

\begin{example}\label{ex:conjsharpness}
\rm{We adapt an example from \cite[p.90]{bishop_peres_2016} on $\mathbb{R}$ to lines in $\mathbb{R}^2$. For any $S \subset \mathbb{N}$, let
$$
A_S : = \{x \in [0,1] : \sum\limits_{k = 1}^{\infty}x_k 2^{-k}\}, \textrm{ where }
x_k \in
\begin{cases}
    \{0,1\}, \, k \in S \\
    \{0\}, \quad k \in S^C
\end{cases}.
$$
Note that $[0,1] \subset A_{S} + A_{S^C}$ for any choice of $S \subset \mathbb{N}$. It is shown in \cite{bishop_peres_2016} that
$$
\overline{\dim_{\mathscr{M}}} A_{S} = \lim_{N \rightarrow \infty} \frac{\# (S \cap \{1,...,N\})}{N},
$$
where $\overline{\dim_{\mathscr{M}}} A_S$ denotes the upper Minkowski dimension of $A_S$. Now, choose $S$ so that
$$
S : = \mathbb{N} \cap \bigg(\bigcup_{j = 1}^{\infty} [(2j - 1)!, 2j! )\bigg) \quad \quad S^C : = \mathbb{N} \cap \bigg(\bigcup_{j = 1}^{\infty} [2j!, (2j + 1)!)\bigg),
$$
and let $A = A_S$ and $B = A_{S^C}$.
We then have that
$$
\lim_{N \rightarrow \infty} \frac{\# (S \cap \{1,...,N\})}{N} = \lim_{N \rightarrow \infty} \frac{\# (S^C \cap \{1,...,N\})}{N} = 0,
$$
so that $\dim A = \dim B = 0$.

Now, let $E \subset \mathbb{R}^2$ be defined as $E = E_1 \sqcup E_2$ where
$$
E_1 : = \{(a,0) : a \in A \} \quad \quad \quad E_2 : = \{(0,b) : b \in B \}.
$$
Then $\dim (E) = 0$ and each $E_i$ is a subset of the two coordinate axes $\ell_x$ and $\ell_y$. We know, then, that
$$
\dim E = T(E) = \dim E + \inf_{H \in \mathcal{A}(2,k)} \dim (E \setminus H) = 0 \quad \quad \forall k = 0,1,2,
$$
so that Corollary \ref{cor:fullbeck+Ren} gives the trivial lower bound of $\dim \mathcal{L}(E) \geq 0$. 

Nevertheless, one has $\dim \mathcal{L}(E) = 1.$ To verify this, first write
$$
\mathcal{L}(E) = \mathcal{L}(E_1) \sqcup \mathcal{L}(E_2) \sqcup \mathcal{L}(E_1, E_2),
$$
and notice that $\dim \mathcal{L}(E_1) = \dim \mathcal{L}(E_2) = 0$. Now, since $\ell_x$ and $\ell_y$ are perpendicular, no lines in $\mathcal{L}(E_1, E_2)$ are vertical. So, we may use the point-line duality dimension bound (see, for example, \cite{orponen2023ABC} for a discussion) to obtain
$$
\dim \mathcal{L}(E_1, E_2) = \dim (E_1 \times E_2) = \dim (A \times B) = 1.
$$
In particular, this shows the \textit{equality}--not inequality--conjectured in \eqref{eq:conjecture}.
}
\end{example}

\begin{remark}
    \rm{Example \ref{ex:conjsharpness} demonstrates the improvement of Conjecture \ref{conj:productlines} over Corollary \ref{cor:fullbeck+Ren} by first finding two sets $A, B \subset \mathbb{R}$ such that $\dim (A \times B) > \dim A + \dim B$ and then embedding these sets in perpendicular lines in $\mathbb{R}^2$. It is likely that a similar method could give further examples relevant to Conjecture \ref{conj:productlines} in $\mathbb{R}^n$ for $n \geq 3$. That is, taking sets $A \subset \mathbb{R}^m$ and $B \subset \mathbb{R}^n$ such that $\dim (A \times B) > \dim A + \dim B$. Such sets are constructed in \cite{Hatano1971}, with parameters $\alpha = \dim A$, $\beta = \dim B$ and $\gamma = \dim (A \times B)$ satisfying $\alpha \leq m$ and
    $$
    \alpha + \beta < \gamma < \min \{ \alpha + m, \beta + n \}.
    $$
    We plan to study such generalizations at a later time.
    }
\end{remark}

\subsection{Outline of Paper} 

We begin by proving Theorem \ref{thm:fullbeck}, and thus Corollary \ref{cor:fullbeck+Ren}, in the plane in Section \ref{sec:plane}. This is done to outline the approach we will take in higher dimensions. We also take this time to develop useful lemmata regarding the affine Grassmannian and the relation to radial projections. Then, in Section \ref{sec:higherdim}, we discuss and prove the higher dimensional results. In this section, we generalize the planar lemmata and utilize results of Ren \cite{ren2023discretized} and the first author, Fu, and Ren \cite{brightfuren2024radial}.

\bigskip
\begin{sloppypar}
\noindent {\bf Acknowledgements.} A number of ideas for this paper were conceived while the authors were working on the Study Guide Writing Workshop 2023 at UPenn. We would like to thank our collaborators on the study guide: Ryan Bushling and Alex Ortiz. We would also like to thank Hong Wang for words of insight and encouragement when first attempting this problem at the Study Guide Workshop. Finally, we also must thank Josh Zahl for insightful discussions, especially for helping us formalize Corollary \ref{cor:fullbeck+Ren} and Conjecture \ref{conj:productlines}. 
\end{sloppypar}

\section{The Planar Theorem} \label{sec:plane}

\subsection{The Dimension of Line Families} \label{ss:planarprelims}
For each $n \geq 2$, we work with $\mathcal{G}(n,1)$, the set of linear subspaces of $\mathbb{R}^n$ and $\mathcal{A}(n,1)$, the family of affine lines in $\mathbb{R}^n$.

Before beginning our work on line families, we make a small note of the following notation. Whenever $x \in \mathbb{R}^n$, the map $\pi_x : \mathbb{R}^n \setminus \{x\} \rightarrow \mathbb{S}^{n-1}$ will denote radial projection onto the sphere centered at $x$; however, if whenever $L \in \mathcal{A}(n,1)$, we let $\pi_L : \mathbb{R}^n \rightarrow L$ denote the orthogonal projection map onto $L$.

We will view both $\mathcal{G}(n,1)$ and $\mathcal{A}(n,1)$ as a topological metric spaces. For $\mathcal{G}(n,1)$, we need only use the standard metric,
\begin{equation*}
    d_{\mathcal{G}(n,1)} (L, L') : = \lVert \pi_L - \pi_{L'}\rVert.
\end{equation*}
where $\pi_L$ denotes orthogonal projection onto $L$.

This metric induces a natural notion of \textit{measure} and \textit{dimension} on $\mathcal{G}(n,1)$, which is defined to the(normalized) $\mathcal{H}^s$ measures on $\mathbb{S}^{n-1}$ and radial projection about the origin. Since we often employ radial projection through the origin, we define it below.

\begin{definition}\label{eq:}
    For $x \in \mathbb{R}^n \setminus \{0\}$, we let $\pi_O (x)$ denote the radial projection of $x$ onto $\mathbb{S}^{n-1}$ centered at the origin. That is
    $$
    \pi_O (x) : = \frac{x}{|x|}, \quad \quad \forall x \in \mathbb{R}^n \setminus \{0\}.
    $$
\end{definition}
In particular, if $L \in \mathcal{G}(n,1)$ is any line through the origin, $\pi_O (L)$ consists of two antipodal points. Moreover, whenever $\mathcal{L} \subset \mathcal{G}(n,1)$ is (viewed as) a collection of lines in $\mathbb{R}^n$, we let
$$
\Theta_{\mathcal{L}} : = \pi_{O} (\bigcup_{L \in \mathcal{L}} L)  = \bigcup_{L \in \mathcal{L}} (L \cap \mathbb{S}^{n-1}),
$$
In words, $\Theta_{\mathcal{L}}$ is the set of directions determined by $\mathcal{L}$. We then use a notion of measure and dimension on $\mathcal{G}(n,1)$, which is discussed, for example, in \cite{mattila1999}.

\begin{definition}\label{def:hausmeasgrass}
For each $0 \leq s \leq n - 1$, the \textbf{s-Hausdorff measure} $\gamma_{s}$ on $\mathcal{G}(n,1)$ is defined as,
$$
\gamma_{s} (\mathcal{L}) : = \mathcal{H}^{s}\big\vert_{\mathbb{S}^{n-1}} \big( \Theta_{\mathcal{L}} \big), \quad \forall \, \mathcal{L} \subset \mathcal{G}(n,1),
$$
The \textbf{Hausdorff dimension} of $\mathcal{L}$ is then defined in the usual way but relative to the measures $\gamma_s$.
\end{definition}

For the affine Grassmanian $\mathcal{A}(n,1)$, we use a family of metrics parametrized by $w \in \mathbb{R}^n$.

\begin{definition}
For each $w \in \mathbb{R}^n$, we define a metric $d^w := d_{\mathcal{A}(n,1)}^w (\cdot, \cdot)$ on $\mathcal{A}(n,1)$ by setting,
\begin{equation*}
    d_{\mathcal{A}(n,1)}^w (\ell, \ell') = \lVert\pi_{L_w} - \pi_{L_{w}'}\rVert + |a_w - a_{w}'|, \quad \forall \ell, \ell' \in \mathcal{A}(n,1).
\end{equation*}
where $\pi_{L_w}$ denotes projection onto the line $L_w$, which is parallel to $\ell$ and passes through $w$, and $a_w$ is the point of intersection of the lines $\ell$ and the $n-1$-dimensional hyperplane through $w$ which is perpendicular to $L_w$.
\end{definition}

This family of metrics $d_{\mathcal{A}(n,1)}^w$ allows us to perform the following important ``change-of-coordinates'' for a family of lines passing through a common point. Indeed, notice that if $\mathcal{L}_w$ is a family of lines passing through $w \in \mathbb{R}^n$, then $a_w = a_{w}' = w$ for each $\ell, \ell' \in \mathcal{L}_{w}$. Consequently
$$
\sup_{\ell, \ell' \in \mathcal{L}_w} d^w (\ell, \ell') = \sup_{L_w, L_{w}'} \lVert\pi_{L_w} - \pi_{L_{w}'}\rVert = \sup_{L, L'} \lVert\pi_{L} - \pi_{L'}\rVert,
$$
where $L \sim L_w$ and $L' \sim L_{w}'$ are the linear subspaces parallel to $L_w$ and $L_{w}'$. So, relative to the metric $d^w$, the diameter of a collection of lines $\mathcal{L}_w$ passing through $w \in \mathbb{R}^n$ is computable in terms of $d_{\mathcal{G}(n,1)}$--the standard metric on the Grassmanian of linear subspaces. 

For us, the most important property of the metrics $d^w$s are their equivalence.

\begin{proposition}\label{thm:affinemetricequivalence}
    Suppose that $w_1$ and $w_2$ are points in $\mathbb{R}^n$. Then, the associated metrics $d^{w_1}$ and $d^{w_2}$ are bilipschitz equivalent. That is, there exist constants $C_1, C_2$--which are allowed to depend on $w_1$ and $w_2$--such that:
    \begin{equation}\label{eq:normequiv}
C_1 d^{w_1} (\ell, \ell^{'}) \leq d^{w_2} (\ell, \ell') \leq C_2 d^{w_1} (\ell, \ell')
    \end{equation}
    for all lines $\ell, \ell' \in \mathcal{A}(n,1)$.
\end{proposition}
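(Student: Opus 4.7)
The plan is to exploit the natural decomposition of $d^w$ into a ``directional'' term $\|\pi_{L_w} - \pi_{L_{w}'}\|$ and a ``basepoint'' term $|a_w - a_{w}'|$. The first step is to observe that the directional term is actually independent of $w$: since $L_w$ and $L_{w}'$ are affine translates of the linear subspaces $V(\ell), V(\ell') \in \mathcal{G}(n,1)$ parallel to $\ell$ and $\ell'$, the operator norm of the difference of their orthogonal projections equals $\|\pi_{V(\ell)} - \pi_{V(\ell')}\|$, which depends only on the directions. Thus the directional term contributes identically to $d^{w_1}(\ell,\ell')$ and $d^{w_2}(\ell,\ell')$, and it suffices to compare $|a_{w_1} - a_{w_1}'|$ with $|a_{w_2} - a_{w_2}'|$.

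The key identification is that $a_w$ is precisely the orthogonal projection of $w$ onto the affine line $\ell$ (since $a_w$ is defined as $\ell \cap (w + V(\ell)^{\perp})$). Consequently $w \mapsto a_w$ is an affine map whose linear part is $P_{V(\ell)}$, the orthogonal projection onto the $1$-dimensional subspace parallel to $\ell$. I would therefore compute, for any two reference points $w_1, w_2 \in \mathbb{R}^n$,
$$
a_{w_2} - a_{w_1} = P_{V(\ell)}(w_2 - w_1), \qquad a_{w_2}' - a_{w_1}' = P_{V(\ell')}(w_2 - w_1),
$$
and subtract to obtain the identity
$$
(a_{w_2} - a_{w_2}') = (a_{w_1} - a_{w_1}') + \bigl(P_{V(\ell)} - P_{V(\ell')}\bigr)(w_2 - w_1).
$$
Taking norms and applying the operator-norm bound yields
$$
|a_{w_2} - a_{w_2}'| \leq |a_{w_1} - a_{w_1}'| + |w_2 - w_1| \cdot \|\pi_{V(\ell)} - \pi_{V(\ell')}\|.
$$

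The conclusion is then immediate: adding the shared directional quantity $\|\pi_{V(\ell)} - \pi_{V(\ell')}\|$ to both sides and factoring produces
$$
d^{w_2}(\ell, \ell') \leq (1 + |w_2 - w_1|) \, d^{w_1}(\ell, \ell'),
$$
and swapping the roles of $w_1$ and $w_2$ gives the reverse inequality, so one may take $C_2 = 1 + |w_1 - w_2|$ and $C_1 = C_2^{-1}$.

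I do not anticipate a genuine obstacle here; the argument is essentially a one-line computation once the correct viewpoint is fixed. The only real subtlety is the interpretation of $\|\pi_{L_w} - \pi_{L_{w}'}\|$ when $L_w$ and $L_{w}'$ are \emph{affine} subspaces, rather than linear ones: this must be read as the operator norm of the difference of the linear parts of the two affine projections, which by translation invariance coincides with the Grassmannian distance between the directions $V(\ell)$ and $V(\ell')$. Clarifying this convention at the outset is what makes the directional term visibly $w$-independent and reduces the entire problem to the affine-linear behavior of the map $w \mapsto a_w$.
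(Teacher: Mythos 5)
Your proof is correct, and it is both more general and cleaner than the argument in the paper. The paper only treats a restricted special case: two non-parallel lines meeting at a point $w$, comparing $d^w$ (where the basepoint terms vanish) against $d^O$, via a trigonometric argument with two right triangles whose hypotenuses lie along the segment $Ow$, yielding $|a - a'| \lesssim \delta\,|W|$. Your route instead proves the full statement for arbitrary $w_1, w_2$ and arbitrary pairs of lines (parallel ones included): the key observation that $a_w$ is the orthogonal projection of $w$ onto $\ell$, so that $a_{w_2} - a_{w_1} = P_{V(\ell)}(w_2 - w_1)$, turns the comparison into the one-line identity $(a_{w_2} - a_{w_2}') - (a_{w_1} - a_{w_1}') = \bigl(P_{V(\ell)} - P_{V(\ell')}\bigr)(w_2 - w_1)$, and the error is absorbed into the shared directional term. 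This buys you explicit constants $C_2 = 1 + |w_1 - w_2|$, $C_1 = C_2^{-1}$, with no case analysis; the paper's version, by contrast, is really a worked example meant to illustrate the change of basepoint used later in Lemma~\ref{thm:linefamdim} (and its stated constant $C_2 \gtrsim |W|$ degenerates as $|W| \to 0$, where yours does not). Your flag about interpreting $\lVert \pi_{L_w} - \pi_{L_w'} \rVert$ via the linear parts is the right convention and matches the paper's own usage. No gaps.
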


\begin{proof}
    The ideas behind this Proposition are discussed in \cite{mattila1999}. However, we give a proof of the following restricted special case, as it is essentially the situation we encounter in the following key result (Lemma \ref{thm:linefamdim}). Specifically: we will be studying line families that intersect in a common point--so, the following special case considers the transformation from the origin to this point of intersection.

    \medskip
    Suppose $\ell, \ell' \in \mathcal{A}(n,1)$ are non-parallel lines, and set $w : = \ell \cap \ell'$. Let us show that there exist constants $C_1$ and $ C_2$ such that
    $$
    C_1 d^{w} (\ell, \ell') \leq d^O (\ell, \ell') \leq C_2 d^w (\ell, \ell')
    $$
    where $O$ is the origin. Importantly, our constants $C_1$ and $C_2$ will depend upon the distance $W : = |O - w| = |w|$, which is valid since our choice of $w$ fixes the metric, but not the lines in question.

\begin{figure}[h!]
    \centering
    \includegraphics[width=.5\textwidth]{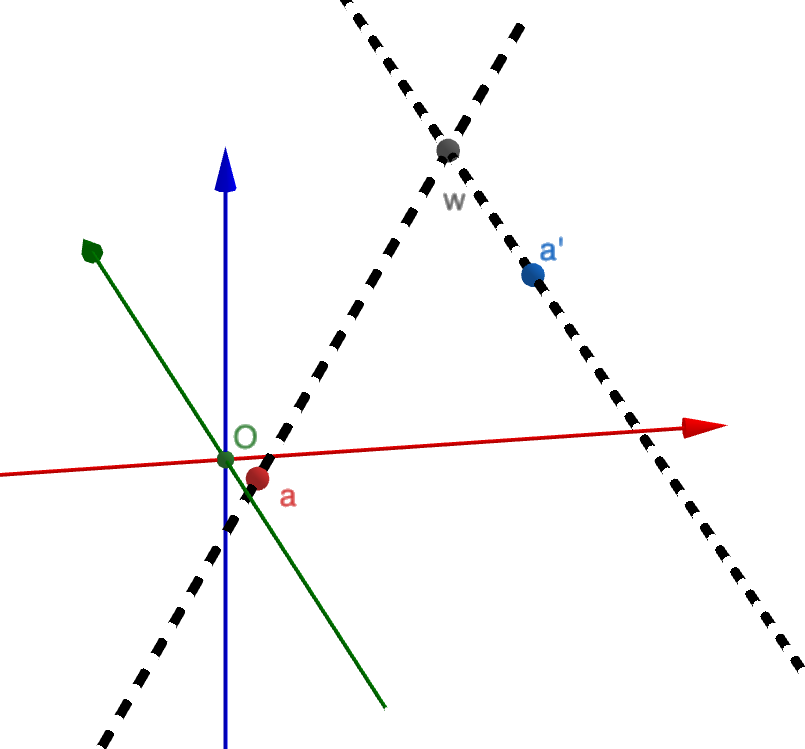}
    \caption{We work with the change-of-coordinates which takes us from the origin to the point of intersection of the lines $\ell$ and $\ell'$. This change-of-coordinates is highly important for what comes later.}
    \label{fig:specialcasemetric}
\end{figure}
    
    It is clear that the projection operators satisfy,
    $$
    \lVert\pi_{L_w} - \pi_{L_{w}'}\rVert = \lVert\pi_{L} - \pi_{L'}\rVert.
    $$
    which follows directly from the definition of the operator norm. So, call this value $\delta > 0$. Then, we have that
    $$
    d^w (\ell, \ell') = \delta
    $$
    $$
    d^O (\ell, \ell') = \delta + |a - a'|
    $$
    It is obvious that the left-hand inequality of \eqref{eq:normequiv} holds with constant $C_1 = 1$. It remains to show the right-hand side of the inequality is valid.

    Consider the two triangles $T$ and $T'$ formed by connecting the points $w$, $a$ and $O$ and $w$, $a'$ and $O$, respectively. Notice that, the assumption that $a \in L^{\perp}$ and $a' \in L'^{\perp}$ implies that both $T$ and $T^{'}$ are right-triangles, and both have hypotenuse along the segment connecting $w$ and $O$. Moreover, if $\theta$ and $\theta'$ denote the the angles of triangles $T$ and $T'$ (resp.) at vertex $w$, then we have
    $$
    \theta + \theta' \lesssim \delta.
    $$
    This is just another way of asserting that both $\theta$ and $\theta'$ are proportionally bounded above by the separation-in-direction of $\ell$ and $\ell'$.
    
\begin{figure}[h!]
\centering
\begin{subfigure}{.5\textwidth}
  \centering
  \includegraphics[width=.45\linewidth]{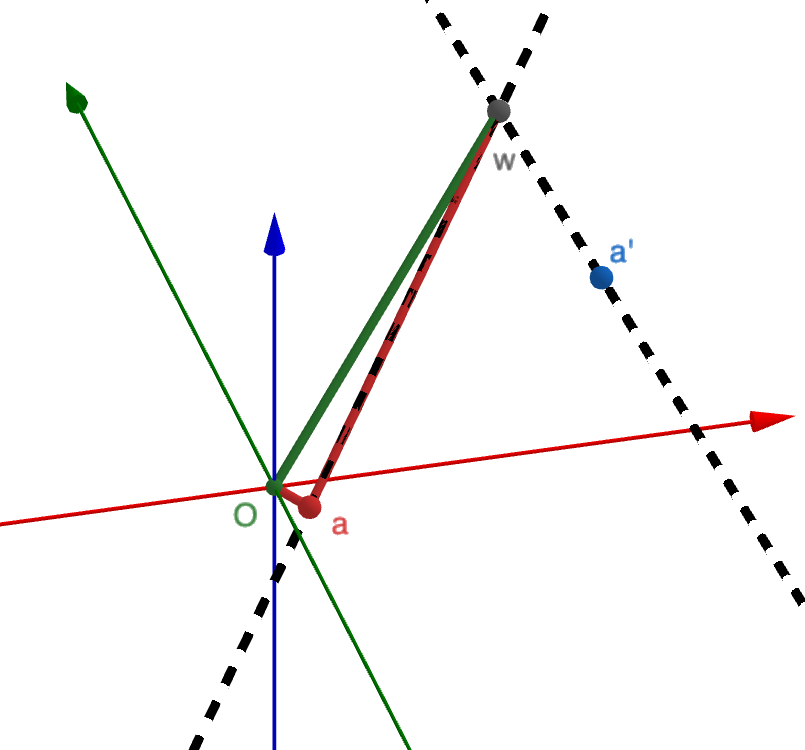}
  \caption{The right triangle formed by the origin, $a$ and $w$.}
  \label{fig:sub1}
\end{subfigure}%
\hspace{.2cm}
\begin{subfigure}{.45\textwidth}
  \centering
  \includegraphics[width=.5\linewidth]{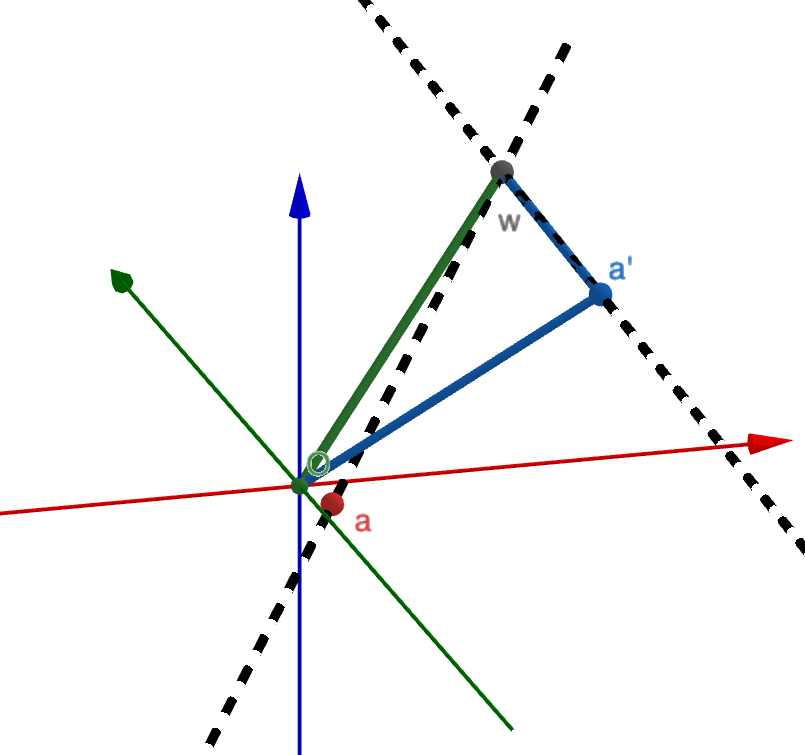}
  \caption{The right-triangle formed by the origin, $a'$ and $w$.}
  \label{fig:sub2}
\end{subfigure}
\caption{Both of these right-triangles are contained in two separate two-dimensional hyperplanes. Hence, we can apply elementary trigonometry and the triangle inequality to estimate their dimensions.}
\label{fig:test}
\end{figure}

\noindent

Combining our previous observations, we have
    $$
    |a - a'| \leq |a - O| + |a' - O| = \sin \theta |W| + \sin \theta' |W| \lesssim (\theta + \theta') |W|.
    $$
    Since $\theta + \theta' \lesssim \delta$, this gives:
    $$
    d^O (\ell, \ell') = \delta + |a - a'| \lesssim \delta |W| = |W| \cdot d^w (\ell, \ell').
    $$
    Hence, it suffices to take $C_2 \gtrsim |W|$. The proof is finished for this special case.
\end{proof}

Proposition \ref{thm:affinemetricequivalence} guarantees that any $s$-dimensional measures we generate from the metrics $d^w$ are guaranteed to be absolutely continuous. In particular, then, we can define our notion of $s$-dimensional Hausdorff measure on $\mathcal{A}(n,1)$ relative to the metric $d_{\mathcal{A}(n,1)}^O$ (so $w$ is chosen to be the origin). This is the metric given in \cite{mattila1999}. This brings us to the following Lemma--which we think of as an identity for calculating the dimension of line families which happen to have common intersection at a point.

\begin{lemma}\label{thm:linefamdim}
    Let $\mathcal{L}_{x}$ be a family of lines in $\mathbb{R}^n$, and suppose there exists a common point $x \in \ell$ for each $\ell \in \mathcal{L}_{x}$. Then,
    \begin{equation*}
        \dim \mathcal{L}_{x} = \dim \pi_{x}\bigg( \bigcup_{\ell \in \mathcal{L}} \ell \setminus \{x\} \bigg).
    \end{equation*}
\end{lemma}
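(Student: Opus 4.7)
\medskip

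\noindent\textbf{Proof plan.} The plan is to show the equality by identifying $\mathcal{L}_x$, viewed as a subset of the metric space $(\mathcal{A}(n,1), d^O)$, with a subset of the linear Grassmannian $\mathcal{G}(n,1)$ via the natural ``direction'' map, and then to invoke Definition \ref{def:hausmeasgrass} to read off its dimension as the Hausdorff dimension of the corresponding set of directions on $\mathbb{S}^{n-1}$. The map to use is $\Phi : \mathcal{L}_x \to \mathcal{G}(n,1)$ sending each line $\ell \ni x$ to its parallel translate $L$ through the origin. The main point is that $\Phi$ is bilipschitz, so both the domain and image have the same Hausdorff dimension, after which the conclusion is a bookkeeping identification of $\Theta_{\Phi(\mathcal{L}_x)}$ with $\pi_x\bigl(\bigcup_{\ell \in \mathcal{L}_x} \ell \setminus \{x\}\bigr)$.

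\medskip

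\noindent First I would reduce from the ambient metric $d^O$ to the metric $d^x$ centered at the common point of intersection: by Proposition \ref{thm:affinemetricequivalence}, $d^O$ and $d^x$ are bilipschitz equivalent on $\mathcal{A}(n,1)$, so they induce the same notion of Hausdorff dimension on any subset, including $\mathcal{L}_x$. The reason for switching to $d^x$ is that if $\ell, \ell' \in \mathcal{L}_x$ both pass through $x$, then in the definition of $d^x$ one has $a_x = a_x' = x$, so the ``translation part'' $|a_x - a_x'|$ vanishes and
\[
d^x(\ell, \ell') = \lVert \pi_{L_x} - \pi_{L_x'} \rVert = \lVert \pi_L - \pi_{L'}\rVert = d_{\mathcal{G}(n,1)}(L, L'),
\]
where $L, L'$ are the linear translates of $\ell, \ell'$ through the origin. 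Thus $\Phi$ is a genuine isometry from $(\mathcal{L}_x, d^x)$ onto $(\Phi(\mathcal{L}_x), d_{\mathcal{G}(n,1)})$, and in particular $\dim \mathcal{L}_x = \dim \Phi(\mathcal{L}_x)$.

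\medskip

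\noindent Next I would unwind the definition of dimension on $\mathcal{G}(n,1)$. By Definition \ref{def:hausmeasgrass},
\[
\dim \Phi(\mathcal{L}_x) = \dim \Theta_{\Phi(\mathcal{L}_x)} = \dim \bigcup_{\ell \in \mathcal{L}_x} \bigl( L \cap \mathbb{S}^{n-1} \bigr),
\]
where the right-hand side is measured by the usual Hausdorff measure on $\mathbb{S}^{n-1}$. It remains to identify this union of antipodal pairs with $\pi_x\bigl(\bigcup_{\ell \in \mathcal{L}_x} \ell \setminus \{x\}\bigr)$. For any $y \in \ell \setminus \{x\}$, the vector $\pi_x(y) = (y-x)/|y-x|$ is a unit vector in the line $L$ parallel to $\ell$, and as $y$ ranges over $\ell \setminus \{x\}$ we obtain precisely the two antipodal points $L \cap \mathbb{S}^{n-1}$. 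Taking the union over $\ell \in \mathcal{L}_x$ gives
\[
\pi_x\Bigl( \bigcup_{\ell \in \mathcal{L}_x} \ell \setminus \{x\} \Bigr) = \bigcup_{\ell \in \mathcal{L}_x} \bigl( L \cap \mathbb{S}^{n-1}\bigr) = \Theta_{\Phi(\mathcal{L}_x)},
\]
which, combined with the previous displays, yields the claimed identity.

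\medskip

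\noindent I do not anticipate a serious obstacle here; the only point that requires care is verifying that the simplification of $d^x$ for lines through $x$ really does match the Grassmannian metric $d_{\mathcal{G}(n,1)}$, which is essentially built into the definition of $d^x$ (and is exactly the reason that metric was introduced in the preceding discussion). The invocation of Proposition \ref{thm:affinemetricequivalence} is what makes the choice of basepoint $x$, rather than the origin, harmless for dimension computations.
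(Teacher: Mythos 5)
Your proposal is correct and follows essentially the same route as the paper: both arguments use Proposition \ref{thm:affinemetricequivalence} to reduce to the basepoint $x$ (the paper translates $x$ to the origin, you equivalently switch to the metric $d^x$), identify $\mathcal{L}_x$ isometrically with a subset of $\mathcal{G}(n,1)$ via the direction map, and then read off the dimension from Definition \ref{def:hausmeasgrass} by matching $\Theta_{\mathcal{L}_x}$ with the radial projection $\pi_x\bigl(\bigcup_{\ell}\ell\setminus\{x\}\bigr)$. Your write-up simply makes the isometry $\Phi$ and the antipodal-pair identification slightly more explicit than the paper does.
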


\begin{proof}
    The proof is a summation of what we have already discussed. Namely, since our notion of dimension is stable under translation, we can freely choose $x = O$ to be the origin. This is critical, because in this special situation, $\mathcal{L}_{O}$ can be taken as a subset of $\mathcal{G}(n,1)$. In particular, we can define the dimension of our line family $\mathcal{L}_x$ relative to the measure $\gamma_s$ defined in Definition \ref{def:hausmeasgrass}. The details--again, freely assuming now that $x = O$ due to Proposition \ref{thm:affinemetricequivalence}--are as follows:
    \begin{eqnarray*}
    \dim \mathcal{L}_{x} & : = & \sup \{s : \gamma_s (\mathcal{L}_{O}) = 0 \} \\[1ex]
    \quad & = & \sup \{ s : \mathcal{H}^s (\Theta_{\mathcal{L}_{O}}) = 0 \} \\[1ex]
    \quad & = & \sup \big\{ s : \mathcal{H}^s \big(\bigcup_{\ell \in \mathcal{L}_{O}} (\ell \cap \mathbb{S}^{n-1})\big) = 0\big\} \\[1ex]
    \quad & = & \dim \pi_{O}\bigg( \bigcup_{\ell \in \mathcal{L}} (\ell \setminus \{O\}) \bigg)
    \end{eqnarray*}
\end{proof}

\begin{remark}
\rm{In the special case of $\mathcal{A}(2,1)$, one may leverage the duality of points and lines to greatly simplify the notion of dimension (at least in certain special cases). A discussion of this idea is given in \cite{orponen2023ABC} and \cite{orponen2022kaufman}. However, when working in $\mathcal{A}(n,1)$ for $n \geq 3$, this duality is not available to us. For this reason, we think of Lemma \ref{thm:linefamdim} as a substitute for this duality--at least in the special case where our line family has common intersection in a point.}
\end{remark}

These lemmata on dimension of line sets, as well as classic lower bounds on the dimension of Furstenberg sets, are utilized in the proof of Theorem \ref{thm:fullbeck}.

\subsection{Furstenberg Set Estimates} \label{sec:s1.1}

We now go into some background regarding Furstenberg set estimates, which has been a rapidly developing topic over the past few years and one that we will utilize in this paper. An $(s,t)$-\textit{Furstenberg set} in $\R^n$ is a set of points $F$ such that there exists a (non-empty) family of lines $\mathcal L\subset \mathcal A(n,1)$ such that $\dim \mathcal L\geq t>0$, and such that $\dim (F\cap \ell) \geq s>0$ for all $\ell \in L.$ 

The Furstenberg set problem strives to find a lower bound on the dimension of all $(s,t)$-Furstenberg sets--a problem that was recently solved by Kevin Ren and Hong Wang in the plane \cite{ren2023furstenberg}. Before this result was proven, there were a number of partial results towards the full conjecture which prove useful for this paper. One such result is the the following:
\[
\dim F \geq s + \min\{s,t\}
\]
where $F$ is an $(s,t)$-Furstenberg set. The case when $t\leq s$ was proven by Lutz and Stull using information theory \cite{lutz2017bounding} and the case $s\leq t$ is essentially due to Wolff \cite{wolff1999recent}. Both cases were also proven by H\'era--Shmerkin--Yavicoli in [\cite{héra2020improved}, Theorem A.1]. 

However, studying (generalized) dual Furstenberg sets has been proving fruitful in the past few years. A (generalized) \textit{dual $(s,t)$-Furstenberg set of lines} in $\R^n$ is a set of lines $\mathcal L$ such that there exists a non-empty set of pins $F^\ast \subset \R^n$ such that $\dim F^\ast \geq t>0$, and for all $x\in F^\ast$, there exists a set of lines $\mathcal L_x\subset \mathcal L$ through $x$ such that $\dim \mathcal L_x \geq s \geq 0$. The (generalized) dual Furstenberg set problem then asks for a lower bound on $\dim \mathcal L$. As it happens, in two dimensions, the notion of a dual Furstenberg set is precisely dual to a Furstenberg set using point-line duality (see \cite{orponen2022kaufman} Section 3.1 for more details). In particular, via point-line duality in two dimensions, one obtains 
\[
\dim \mathcal L \geq s + \min \{s,t\}
\]
where $\mathcal L$ is a dual $(s,t)$-Furstenberg set.
This lower bound is the one we shall utilize in the proof of the continuum Erd\H{o}s--Beck Theorem.

\begin{remark}
    \rm{While in higher dimensions, generalized dual Furstenberg sets are \textit{not} dual to a Furstenberg set, the notion still generalizes the lower dimensional case, and are thusly referred to as generalized dual Furstenberg sets. See Section \ref{ss:higherdimprelims} for an analogous lower bound.} 
\end{remark}

\subsection{Planar Erd\H{o}s--Beck Theorems for Fractal Sets} \label{ss:planarstuff}

We now prove our main result in the plane, which we restate here for convenience.

\begin{theorem}\label{thm:planebeck}
    Let $X\subset \R^2$ be Borel. Then, 
    \begin{enumerate}
    \item[\textbf{1)}] if $\dim (X\setminus \ell) = \dim X$ for every $\ell \in \mathcal A(2,1)$, then $\dim \mathcal L(X) \geq \min\{2\dim X, 2\}$.
    \item[\textbf{2)}] if there exists an $\ell \in \mathcal A(2,1)$ such that $\dim (X\setminus \ell) < \dim X$, we let $0 < t \leq \dim X$ such that $\dim (X\setminus \ell) \geq t$ for all $\ell \in \mathcal A(2,1)$. Then,
        \[
        \dim \mathcal L(X) \geq \dim X + t.
        \]
    \end{enumerate}
\end{theorem}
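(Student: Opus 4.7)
The plan is to split the argument along the dichotomy in the statement. Case (1) is precisely the planar continuum Beck theorem of Orponen--Shmerkin--Wang \cite{orponen2022kaufman}, which I would simply invoke. The substance of the proof is Case (2), and my strategy is to produce, out of the hypothesis, a natural generalized dual Furstenberg configuration in $\mathcal A(2,1)$ and then apply the Wolff--Lutz--Stull--H\'era--Shmerkin--Yavicoli estimate recorded in Section \ref{sec:s1.1}.

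Assume we are in Case (2). Let $\ell_{0}\in\mathcal A(2,1)$ be a line with $\dim(X\setminus\ell_{0})<\dim X$, and decompose $X=X_{1}\sqcup X_{2}$ where
\[
X_{1}:=X\cap\ell_{0},\qquad X_{2}:=X\setminus\ell_{0}.
\]
Because the countable (in fact finite) stability of Hausdorff dimension forces $\dim X=\max\{\dim X_{1},\dim X_{2}\}$, the defining inequality gives $\dim X_{1}=\dim X$. On the other hand the hypothesis on $t$ applied to $\ell_{0}$ yields $\dim X_{2}=\dim(X\setminus\ell_{0})\geq t$.

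For each pin $x\in X_{2}$, consider the family $\mathcal L_{x}\subset\mathcal A(2,1)$ of lines through $x$ that also meet $X_{1}$. Since $x\notin\ell_{0}$, the radial projection $\pi_{x}$ restricted to $\ell_{0}$ is a smooth diffeomorphism onto an open arc of $\mathbb S^{1}$, so $\dim\pi_{x}(X_{1})=\dim X_{1}=\dim X$. By Lemma \ref{thm:linefamdim} applied with the common point $x$,
\[
\dim\mathcal L_{x}=\dim\pi_{x}\!\left(\bigcup_{\ell\in\mathcal L_{x}}\ell\setminus\{x\}\right)\geq\dim\pi_{x}(X_{1})=\dim X.
\]
The union $\mathcal L(X_{1},X_{2}):=\bigcup_{x\in X_{2}}\mathcal L_{x}$ is therefore a generalized dual $(s,t')$-Furstenberg set of lines with pin set $X_{2}$ of dimension $t'\geq t$ and with $s=\dim X\geq t$. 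Since the lower bound recorded in Section \ref{sec:s1.1} gives
\[
\dim\mathcal L(X_{1},X_{2})\geq s+\min\{s,t'\}\geq \dim X+t,
\]
and since $\mathcal L(X_{1},X_{2})\subset\mathcal L(X)$, the conclusion follows.

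The main obstacle I anticipate is a technical one rather than a conceptual one: to apply the dual Furstenberg bound I need a measurable/Borel selection so that the family $\{\mathcal L_{x}\}_{x\in X_{2}}$ is honestly a Borel (or at least analytic) subset of $\mathcal A(2,1)$ of the required dimension, with $X_{2}$ replaced if necessary by a Borel subset still of dimension $\geq t-\varepsilon$, and so that the radial projection statement $\dim\pi_{x}(X_{1})=\dim X_{1}$ can be upgraded to hold with the right uniformity in $x$ (for example, by passing to a compact subset of $X_{1}$ realizing dimension $\dim X-\varepsilon$ and using that $\pi_{x}$ is bi-Lipschitz on bounded regions away from $x$). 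Once these are handled, letting $\varepsilon\to 0$ delivers $\dim\mathcal L(X)\geq\dim X+t$ as required.
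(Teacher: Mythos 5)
Your proposal is correct and follows essentially the same route as the paper: Case (1) by the Orponen--Shmerkin--Wang continuum Beck theorem, and Case (2) by noting $\dim(X\cap\ell_0)=\dim X$, using the bi-Lipschitz nature of radial projection from $x\notin\ell_0$ onto $\ell_0$ (this is exactly Lemma \ref{thm:kplanelemma} in the case $n=2$, $k=1$), invoking Lemma \ref{thm:linefamdim} to get $\dim\mathcal L_x\geq\dim X$, and applying the dual $(s,t)$-Furstenberg bound $s+\min\{s,t\}=s+t$. The measurability caveat you raise at the end is reasonable but is not something the paper itself addresses.
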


\begin{proof}
If it is the case that for all lines $\ell$ we have $\dim (X\setminus \ell) = \dim X$, then we may apply the continuum version of Beck's theorem and obtain
\[
\dim \mathcal L(X) \geq \min \{2\dim X, 2\}.
\]

Otherwise, there exists an $\ell \in \mathcal A(2,1)$ such that $\dim X\setminus \ell < \dim X$. Fix $0 < t \leq \dim X$ such that $\dim (X\setminus \ell') \geq t$ for all lines $\ell'$. Given there exists a line $\ell$ such that $\dim (X\setminus \ell) < \dim X$, we have that $\dim (X\cap \ell) = \dim X := s$. Therefore, by Lemma \ref{thm:kplanelemma}, we have that 
    \[
    \dim \pi_x(X\setminus \{x\}) \geq s, \quad \forall x\in X\setminus \ell.
    \]
    Hence, by Lemma \ref{thm:linefamdim}, the set of lines through $x$, $\mathcal L_x\subset \mathcal L(X)$, satisfies 
    \[
    \dim \mathcal L_x = \dim \pi_x(X\setminus \{x\}) \geq s
    \]
    for all $x\in X\setminus \ell$.

    Hence, we have that
    \[
    \mathcal L(X) \supset \bigcup_{x\in X\setminus \ell} \mathcal L_x,
    \]
    where $\dim (X\setminus \ell) \geq t$ (by assumption) and $\dim \mathcal L_x \geq s$ for all $x\in X\setminus \ell$. This is an dual $(s,t)$-Furstenberg set (note that here we used $X\setminus \ell$ as our non-empty set of pins with dimension at least $t>0$ by assumption). Thus, using point-line duality, we see that $$\dim \left(\bigcup_{x\in X\setminus \ell} \mathcal L_x\right) \geq \dim F,$$ where $F$ is an $(s,t)$-Furstenberg set. Note that by assumption, $0 < t \leq s = \dim X$. Thus, the result of Lutz--Stull gives that $\dim F \geq s + t$. In total, we have that 
    \[
    \dim \mathcal L(X) \geq \dim \left(\bigcup_{x\in X\setminus \ell} \mathcal L_x\right) \geq \dim F \geq s + t.
    \]
This gives the desired result.   
\end{proof}

\begin{corollary}
    Let $X\subset \R^2$ be Borel. Then, 
    \begin{enumerate}
        \item[\textbf{1)}] if $\dim (X\setminus \ell) = \dim X$ for every $\ell \in \mathcal A(2,1)$, then $\dim \mathcal L(X) \geq \min\{2\dim X, 2\}$.
    \item[\textbf{2)}] if there exists an $\ell \in \mathcal A(2,1)$ such that $\dim (X\setminus \ell) < \dim X$, we let $0 < t \leq \dim X$ such that $\dim (X\setminus \ell) \geq t$ for all $\ell \in \mathcal A(2,1)$. Then,
        \[
        \dim \mathcal L(X) \geq \max\{\dim X + t, \min\{2\dim X, 2(T(X)-1)\}.
        \]
    \end{enumerate}
\end{corollary}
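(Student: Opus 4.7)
The plan is to obtain this Corollary as a direct combination of the just-proved Theorem \ref{thm:planebeck} with Ren's unconditional continuum Beck Theorem (Theorem \ref{thm:rensBeck}). Case 1 is identical to Case 1 of Theorem \ref{thm:planebeck}, so no additional argument will be needed there.

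For Case 2, the idea is that both lower bounds inside the maximum apply simultaneously. First, I would invoke Theorem \ref{thm:planebeck}(2) under exactly the hypothesis of Case 2 to secure the bound $\dim \mathcal L(X) \geq \dim X + t$. Second, I would invoke Theorem \ref{thm:rensBeck}---which requires no hypothesis on $X$ beyond Borel measurability---to secure $\dim \mathcal L(X) \geq \min\{2\dim X, 2(T(X) - 1)\}$. Taking the maximum of these two lower bounds then recovers the stated inequality.

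The only mild subtlety, and the one point I would take care to spell out, is the interpretation of the trapping number in the planar setting. By definition $T(X) \geq 1$, and the hypothesis of Case 2 provides a line $\ell$ with $\dim(X \setminus \ell) < \dim X$, which forces $T(X) \leq 1$ and hence $T(X) = 1$. Consequently Ren's term in this Corollary degenerates to $\min\{2 \dim X, 0\} = 0$, so in $\mathbb{R}^2$ the maximum is always attained by $\dim X + t$, and the Corollary is logically equivalent to Theorem \ref{thm:planebeck}. I would still present Case 2 in the stated form, however, in order to mirror the shape of the higher-dimensional Corollary \ref{cor:fullbeck+Ren}, where the term $\min\{2\dim X, 2(T(X) - 1)\}$ becomes genuinely informative once $n \geq 3$ and $k < n - 1$.

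Accordingly, there is no real analytic obstacle to overcome: both ingredients are already in hand. The main ``work'' is the bookkeeping of verifying that Theorem \ref{thm:planebeck} and Theorem \ref{thm:rensBeck} both apply under the hypotheses of Case 2, and then observing that the maximum of their two conclusions is exactly the right-hand side displayed in the Corollary.
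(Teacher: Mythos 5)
Your proof is correct and follows essentially the same route as the paper, which obtains the corollary by simply combining Theorem \ref{thm:planebeck} with Theorem \ref{thm:rensBeck} and taking the maximum of the two lower bounds. Your additional observation that the hypothesis of Case 2 forces $T(X)=1$, so that the term $\min\{2\dim X, 2(T(X)-1)\}$ degenerates to $0$ in the plane and the maximum is always attained by $\dim X + t$, is accurate and goes slightly beyond what the paper records.
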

\begin{proof}
    Compare Theorem \ref{thm:planebeck} with Theorem \ref{thm:rensBeck} (due to Ren) to obtain the result.
\end{proof}

\section{The Higher Dimensional Theorem} \label{sec:higherdim}

\subsection{Preliminaries for Higher Dimensions}\label{ss:higherdimprelims}

In the course of proving Theorem \ref{thm:fullbeck}, we will use the following two lemmata.

\begin{lemma}\label{thm:kplanelemma}
    Let $X \subset \mathbb{R}^n$ with $\dim X = s \in [k, k+1)$ for some integer $1 \leq k \leq n - 1$. Assume that there exists a $k$-plane $H \in \mathcal{A}(n,k)$ so that $\dim (X \cap H) = s$. Then,
    \begin{equation}\label{eq:kplanelemma}
        \dim \pi_x (X \setminus \{x\}) \geq s, \quad \forall x \in X \setminus H.
    \end{equation}
\end{lemma}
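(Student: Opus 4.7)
The plan is to leverage the hypothesis $\dim(X \cap H) = s$ together with the inclusion $X \cap H \subset X \setminus \{x\}$, which holds for every $x \notin H$. It then suffices to prove that $\dim \pi_x(X \cap H) \geq s$, which I will establish by showing that $\pi_x|_H$ is a smooth embedding of $H$ into $\mathbb{S}^{n-1}$, and therefore preserves the Hausdorff dimension of Borel subsets of $H$.

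Both the injectivity of $\pi_x|_H$ and the non-degeneracy of its differential at every point of $H$ reduce to the single observation that $x \notin H$. For injectivity, if $\pi_x(y_1) = \pi_x(y_2)$ for $y_1, y_2 \in H$, then $y_1 - x$ is a positive scalar multiple of $y_2 - x$, so $x$ lies on the line through $y_1$ and $y_2$; this line is contained in $H$, forcing $x \in H$, a contradiction. For non-degeneracy of the differential: a direct computation gives $D\pi_x(y) = |y - x|^{-1}(I - \hat{u}\hat{u}^T)$ with $\hat{u} = (y - x)/|y - x|$, so the kernel of $D\pi_x|_H(y)$ is the intersection of the line $\mathbb{R}(y - x)$ with the direction of $H$; this intersection is trivial by the same argument as above. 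Thus $\pi_x|_H$ is an injective smooth immersion, and on every compact subset $K \subset H$ it restricts to a smooth embedding, which is in particular bilipschitz.

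To conclude, fix any $s' < s$. By the inner regularity of Hausdorff dimension on analytic sets, there is a compact set $K \subset X \cap H$ with $\dim K > s'$. Since $\pi_x|_K$ is bilipschitz,
\[
\dim \pi_x(X \setminus \{x\}) \geq \dim \pi_x(K) = \dim K > s',
\]
and letting $s' \to s$ gives \eqref{eq:kplanelemma}. The main technical point is promoting the pointwise statement ``$D\pi_x|_H(y)$ is injective for every $y \in H$'' to a genuine bilipschitz estimate; this is handled by working on compact pieces of $H$, where injectivity together with compactness upgrades a smooth immersion to an embedding. Everything else in the argument is a simple matter of reducing the dimension computation to the assumed full-dimensionality of $X \cap H$ inside the $k$-plane $H$.
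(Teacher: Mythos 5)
Your argument is correct and takes essentially the same route as the paper's: both reduce the claim to the observation that $X \cap H \subset X \setminus \{x\}$ for $x \notin H$ and that $\pi_x$ restricted to the $k$-plane $H$ is locally bilipschitz, hence preserves the Hausdorff dimension of $X \cap H$. The only substantive difference is your appeal to inner regularity to extract compact subsets of nearly full dimension, which implicitly requires $X \cap H$ to be analytic (true in the paper's application, where $X$ is Borel, but not stated in the lemma) and could be avoided, as the paper does, by exhausting $H$ by a countable family of bounded pieces on which $\pi_x$ and its explicit inverse are Lipschitz and then invoking countable stability of Hausdorff dimension.
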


\begin{proof}[Proof of Lemma \ref{thm:kplanelemma}]
    Firstly, note that if $X\setminus H =\emptyset$, this is vacously true, so we may assume that $X\setminus H \neq \emptyset.$

    Call $X_1 := X \cap H$ and $X_2 := X \setminus H$. We show that,
    \begin{equation*}
        \dim \pi_{x_2} (X_1) = s, \quad \forall x_2 \in X_2.
    \end{equation*}
    which clearly implies \eqref{eq:kplanelemma}. Indeed, this follows because the mapping $\pi_{x_2} : H \rightarrow \mathbb{S}^{n-1}$ is locally bi-Lipschitz for each fixed $x_2 \in X_2$. However, since locally bi-Lipschitz mappings preserve dimension, we have
    \begin{equation*}
        \dim \pi_{x_2} (X_1) = \dim X_1 = s.
    \end{equation*}
    Hence, to finish the proof, we show that the mapping $\pi_{x_2} : X_1 \rightarrow \mathbb{S}^{n-1}$ is locally bi-Lipschitz.
    
    We will assume (without loss of generality) that
    $$
    H : = \{x \in \mathbb{R}^n : x_{k+1} = \cdots = x_{n-1} = 0; x_n = 1 \}
    $$ 
    and that $x_2 = 0$. Note that, for an arbitrary $k$-dimensional hyperplane $H \subset \mathbb{R}^n$ and point $x_2 \in \mathbb{R}^n$, we can always perform an affine transformation to arrive at this special case. Since affine transformations preserve Hausdorff dimension, we are fine making this reduction. In this special case, notice that 
    $$\pi_{x_2} (H) = \{\theta \in \mathbb{S}^{n-1} : \theta_{k+1} = \cdots = \theta_{n-1} =0 \} = \Theta.
    $$ 
    So, the mapping $\pi_{x_2}|_{H}$ admits an inverse function $\psi : \Theta \rightarrow H$, defined via:
    \begin{equation*}
        \psi (\theta_1,...,\theta_k, \theta_{k+1},..., \theta_n) : = \left(\frac{\theta_1}{\theta_n},...,\frac{\theta_k}{\theta_n}, 0,...,0, 1\right).
    \end{equation*}

    Now, for each $n = 1,2,3,...$, let
    $$
    X_n : = \{x \in X_1 : |x - e_n| \leq n \},
    $$
    where $e_n$ is the $n$-th standard normal vector in $\mathbb{R}^n$. Similarly, let $\Theta_n : = \pi_{x_2} (X_n)$, which is a subset of $\Theta$. The mapping $\pi_{x_2}|_{X_n}$ is Lipschitz with constant $\sim 1$. Moreover, this mapping admits an inverse $\psi_n : \Theta_n \rightarrow X_n$, which is also Lipschitz with constant $\sim n$. Therefore,
    $$
    \dim \pi_{x_2} (X_n) := \dim \Theta_n = \dim X_n.
    $$
    In particular,
    \begin{equation*}
    \dim \pi_{x_2} (X_1) \geq \sup_{1 \leq n < \infty} (\dim X_n) = \dim X.
    \end{equation*}
    Since $X_1 \subset X$, this concludes the proof.
\end{proof}

In addition to the previous two lemmata, we will require some recent results--- the first of which is a continuum higher dimensional Beck's theorem by Ren, and the second of which is a dual Furstenberg set estimate from the first author, Fu, and Ren---both of which we state without proof.
\begin{theorem}[\cite{ren2023discretized}, Corollary 1.5] \label{thm:higherdimBeck}
    Let $X\subset \R^n$ be a Borel set with $\dim(X\setminus H) = \dim X$ for all $k$-planes $H$. Then, the line set $\mathcal L(X)$ spanned by pairs of distinct points in $X$ satisfies 
    \[
    \dim \mathcal L(X) \geq\min\{2\dim X, 2k\}.
    \]
\end{theorem}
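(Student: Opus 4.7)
The plan is to extend the strategy of the planar Theorem \ref{thm:planebeck} to $\mathbb{R}^n$, replacing its two key ingredients (the elementary radial-projection bound of Lemma \ref{thm:kplanelemma} and the planar dual Furstenberg estimate) with higher-dimensional analogues. Set $s := \dim X$ and $t := \min\{s,k\}$, so that the target lower bound is $2t = \min\{2s, 2k\}$.

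The first step is to produce, for a large set of $x \in X$, a strong lower bound on the radial projection $\pi_x(X \setminus \{x\})$. The hypothesis $\dim (X \setminus H) = \dim X$ for every $k$-plane $H$ is precisely the $k$-plane non-concentration condition appearing in the higher-dimensional radial projection theorems of Ren \cite{ren2023discretized} and of \cite{brightfuren2024radial}. I would invoke such a theorem to produce a Borel subset $Y \subset X$ with $\dim Y = s$ for which
\[
\dim \pi_y(X \setminus \{y\}) \geq t, \qquad \forall y \in Y.
\]
For each such $y$, let $\mathcal{L}_y \subset \mathcal{L}(X)$ denote the family of lines through $y$ that contain at least one other point of $X$. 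Lemma \ref{thm:linefamdim} then converts the radial projection bound directly into $\dim \mathcal{L}_y \geq t$ for every $y \in Y$.

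The second step is a (generalized) dual Furstenberg estimate in $\mathbb{R}^n$. The collection
\[
\mathcal{L}(X) \;\supset\; \bigcup_{y \in Y} \mathcal{L}_y
\]
is a generalized dual $(t, s)$-Furstenberg set of lines, with pin set $Y$ of dimension $s$ and per-pin line families of dimension at least $t$. I would apply the higher-dimensional dual Furstenberg estimate from \cite{brightfuren2024radial} to deduce
\[
\dim \mathcal{L}(X) \;\geq\; t + \min\{t, s\} \;=\; 2t \;=\; \min\{2 \dim X,\, 2k\},
\]
where the middle equality uses that $t = \min\{s,k\} \leq s$. This would close the argument.

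The hard part will be securing the first step with the sharp exponent $t = \min\{s,k\}$. In the plane there is an elementary geometric proof of the analogous pointwise bound via Lemma \ref{thm:kplanelemma}, which exploits the existence of a single preferred line concentrating $X$. Under the non-concentration hypothesis of Theorem \ref{thm:higherdimBeck} there is no such preferred $k$-plane, so one must build a full dimension-$s$ set of good pins $Y$ purely from the non-concentration hypothesis. This is precisely the content of the deep higher-dimensional radial projection machinery developed in \cite{ren2023discretized} and extended in \cite{brightfuren2024radial}; the step cannot afford any loss in the exponent $t$, since any loss would propagate directly into the final line-set bound. By contrast, the dual Furstenberg reduction in the second step, while nontrivial, is comparatively routine once the correct pointwise radial projection bound has been established.
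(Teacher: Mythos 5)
This statement is not proved in the paper at all: it is Ren's Corollary 1.5, imported verbatim and explicitly ``stated without proof,'' so there is no internal argument to compare yours against. That said, your sketch is a faithful reconstruction of how that corollary is actually derived in \cite{ren2023discretized}, and it follows exactly the template the paper itself uses for Theorems \ref{thm:planebeck} and \ref{thm:higherdimtheorem}: a radial projection bound on a large pin set, converted via Lemma \ref{thm:linefamdim} into pencils of lines, followed by the dual Furstenberg estimate of Theorem \ref{thm:higherdimFurstenberg}. The arithmetic $t + \min\{t,s\} = 2\min\{s,k\}$ is right.

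The one place you should be more careful is the first step, which you correctly identify as carrying all the weight. Ren's radial projection theorem, in its basic form, produces a \emph{single} good pin (or a supremum over pins), which is not enough to feed the dual Furstenberg machine; what you need is the exceptional-set formulation, namely that the set $E = \{x \in X : \dim \pi_x(X\setminus\{x\}) < \min\{\dim X, k\}\}$ is either low-dimensional or contained in a $k$-plane. Only then does the hypothesis $\dim(X\setminus H) = \dim X$ for every $k$-plane $H$ let you take $Y = X \setminus E$ (or $Y = X \setminus H$ for an appropriate $H \supset E$) with $\dim Y = \dim X$. As written, ``invoke such a theorem to produce $Y$ with $\dim Y = s$'' elides precisely this step, and it is the step that distinguishes the non-concentration case from the concentrated case handled by Lemma \ref{thm:kplanelemma}. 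Since you are in any event deferring to the same external machinery that the paper cites, your proposal is best read as a correct reduction of the corollary to Ren's main theorem rather than an independent proof; making the exceptional-set argument explicit would close the only real gap.
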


\begin{theorem}[\cite{brightfuren2024radial}, Theorem 4] \label{thm:higherdimFurstenberg} 
Let $0\leq s \leq n-1$ and let $0<t\leq n$.
Given $\mathcal L$ is a dual $(s,t)$-Furstenberg set of lines in $\R^n$, we have, 
\[
\dim \mathcal L \geq s + \min \{s,t\}.
\]
\end{theorem}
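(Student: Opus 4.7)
The plan is to adapt the planar strategy, where point-line duality converts a dual $(s,t)$-Furstenberg set of lines $\mathcal L$ into an ordinary $(s,t)$-Furstenberg set of points for which the Lutz--Stull/Héra--Shmerkin--Yavicoli bound applies, into higher dimensions where no such duality is available. Instead I would work directly on $\mathcal A(n,1)$: use Lemma \ref{thm:linefamdim} to identify each pencil $\mathcal L_x$ with its direction set $\Theta_x \subset \mathbb S^{n-1}$ of dimension $\geq s$, and then construct a Frostman-type measure on $\mathcal L$ by integrating direction measures against a Frostman measure on the pin set $F^\ast$.

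Concretely, I would fix a Frostman measure $\mu$ on $F^\ast$ of exponent just below $t$ and, for each $x \in F^\ast$, a Frostman measure $\nu_x$ on $\Theta_x$ of exponent just below $s$. Push the product $d\mu(x)\,d\nu_x(\theta)$ forward to a measure $\lambda$ on $\mathcal L$ via the map $(x,\theta) \mapsto \ell(x,\theta)$, the unique affine line through $x$ with direction $\theta$. Proposition \ref{thm:affinemetricequivalence} tells us that the metric on $\mathcal A(n,1)$ is locally bi-Lipschitz equivalent to the product metric on direction times footpoint, so the $\sigma$-energy of $\lambda$ decouples into integrals on the sphere and on a transverse hyperplane, ready for a standard potential-theoretic lower bound on $\dim \lambda$.

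In the regime $t \leq s$, the target bound $\dim \mathcal L \geq s + t$ would follow from a Fubini-style energy calculation, provided that distinct parameters $(x,\theta)\neq(x',\theta')$ produce distinct lines for $\mu\times\mu$-typical choices. This non-collapse step is the higher-dimensional analogue of the point in the planar proof where one applies Lemma \ref{thm:kplanelemma}: for $\mu$-typical $x$, the lines of $\Theta_x$ should not over-concentrate through any other fixed pin, which can be verified using a radial projection lemma against $\nu_x$. In the regime $s \leq t$, the target $\dim \mathcal L \geq 2s$ is more delicate, and I would invoke Ren's higher-dimensional radial projection theorem---the same ingredient that powers Theorem \ref{thm:higherdimBeck}---to show that the join of two generic pencils through distinct pins of $F^\ast$ already traces out a $2s$-dimensional subfamily of $\mathcal A(n,1)$.

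The \textbf{main obstacle} is precisely this second regime: once $2s$ approaches $n-1$, the dimensional input has to be upgraded to a genuine incidence or $L^2$ estimate, and the transversality between pencils that prevents a drop in dimension is exactly what forces the use of the discretized sum-product and radial projection machinery developed by Ren. Packaging those tools in the form required by the Frostman energy computation sketched above---rather than merely invoking them as black boxes---is the genuinely non-trivial step of the proof plan, and is presumably why the Bright--Fu--Ren paper devotes substantial effort to establishing it.
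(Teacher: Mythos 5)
First, a point of comparison: the paper does not prove this statement at all. Theorem \ref{thm:higherdimFurstenberg} is imported verbatim from \cite{brightfuren2024radial} and explicitly ``stated without proof,'' so there is no internal argument to measure your proposal against; you are in effect attempting to reprove an external result, and your attempt must therefore stand on its own.

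On its own terms, what you have written is a plan rather than a proof, and both regimes have genuine gaps. In the regime $s\leq t$ you concede this yourself: the claimed bound $2s$ is deferred to ``Ren's radial projection theorem'' and ``discretized sum-product machinery'' without specifying what statement is being invoked or how two pencils ``trace out a $2s$-dimensional subfamily''; that is the theorem, not a proof of it. Less obviously, the Fubini-style energy computation you propose for the regime $t\leq s$ also fails as stated. Writing $\lambda$ for the pushforward of $d\mu(x)\,d\nu_x(\theta)$ under $(x,\theta)\mapsto \ell(x,\theta)$, one gets
\[
\lambda\big(B(\ell,r)\big)\;\lesssim\;\int_{F^\ast}\nu_x\big(\{\theta:\ell(x,\theta)\in B(\ell,r)\}\big)\,d\mu(x)\;\lesssim\; r^{s}\,\mu\big(N_{Cr}(\ell)\big),
\]
since the direction cap contributes $r^{s}$ and only pins in an $O(r)$-neighborhood $N_{Cr}(\ell)$ of the line $\ell$ contribute at all. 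To conclude $\dim\mathcal L\geq s+t$ by potential theory you would need $\mu(N_{Cr}(\ell))\lesssim r^{t}$, but a $t$-Frostman condition controls balls, not tubes: if a positive proportion of $F^\ast$ is concentrated on (or near) a single line, then $\mu(N_{Cr}(\ell))$ can be of order $1$, and the energy integral diverges at the wrong exponent. This collinear-pin configuration is exactly the enemy in all proofs of such bounds, and it is not addressed by your appeal to Proposition \ref{thm:affinemetricequivalence} (which only compares the metrics $d^{w}$ for different basepoints) or by the vague ``non-collapse'' step; one must either run a separate argument when the pins concentrate on a line (using that pencils at distinct pins share at most one line) or set up the estimate so that tube concentration is quantified, which is where the real work lies.
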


\subsection{Higher Dimensional Erd\H{o}s-Beck Theorems for Fractal Sets} \label{ss:higherdimtheorem}

We now prove the higher-dimensional Erd\H{o}s-Beck Theorem, Theorem \ref{thm:fullbeck}, restated here.

\begin{theorem}\label{thm:higherdimtheorem}
    Let $X\subset \R^n$ be Borel, and let $k\in [1,n-1]$ be an integer. Then, 
    \begin{enumerate}
        \item[\textbf{1)}] if $\dim (X\setminus H) = \dim X$ for every $H \in \mathcal A(n,k)$, then $\dim \mathcal L(X) \geq \min\{2\dim X, 2k\}$.
        \item[\textbf{2)}] if there exists an $H \in \mathcal A(n,k)$ such that $\dim (X\setminus H) < \dim X$, we let $0 < t \leq \dim X$ such that $\dim (X\setminus P) \geq t$ for all $P \in \mathcal A(n,k)$. Then,
    \end{enumerate}
    \[
    \dim \mathcal L(X) \geq \dim X + t.
    \]
\end{theorem}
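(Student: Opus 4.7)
The plan is to mirror the planar proof of Theorem~\ref{thm:planebeck}, replacing each two-dimensional ingredient with its higher-dimensional analogue established in Section~\ref{ss:higherdimprelims}. Part (1) will follow immediately from Ren's higher-dimensional continuum Beck theorem (Theorem~\ref{thm:higherdimBeck}), since the hypothesis that $\dim(X\setminus H) = \dim X$ for every $H \in \mathcal{A}(n,k)$ coincides exactly with the assumption of that result.

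For Part (2), I would first set $s := \dim X$ and observe that the existence of $H \in \mathcal{A}(n,k)$ with $\dim(X\setminus H) < s$ forces $\dim(X \cap H) = s$, and in particular $s \leq k$ (since $X \cap H \subset H$). Lemma~\ref{thm:kplanelemma} then applies to give
\[
\dim \pi_x(X \setminus \{x\}) \geq s, \quad \forall x \in X \setminus H,
\]
and Lemma~\ref{thm:linefamdim} translates this into the statement that the subfamily $\mathcal{L}_x \subset \mathcal{L}(X)$ of lines passing through $x$ and some other point of $X$ satisfies $\dim \mathcal{L}_x \geq s$. Taking $F^{\ast} := X \setminus H$ as a set of pins, so that $\dim F^{\ast} \geq t > 0$ by hypothesis, the inclusion
\[
\mathcal{L}(X) \supset \bigcup_{x \in F^{\ast}} \mathcal{L}_x
\]
exhibits $\mathcal{L}(X)$ as containing a dual $(s,t)$-Furstenberg set of lines in $\mathbb{R}^n$. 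Since $0 < t \leq s$, applying Theorem~\ref{thm:higherdimFurstenberg} (the Bright--Fu--Ren dual Furstenberg estimate) will yield
\[
\dim \mathcal{L}(X) \geq s + \min\{s,t\} = s + t = \dim X + t,
\]
completing the proof.

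I anticipate no serious technical obstacle, because the argument is a direct lift of the planar proof with every planar tool replaced by a higher-dimensional counterpart that is already in hand. The one delicate point worth verifying is that Lemma~\ref{thm:kplanelemma} really does apply in the relevant regime: the hypothesis $\dim(X \setminus H) < \dim X$ automatically forces $\dim X \leq k$, placing us inside the lemma's scope regardless of the precise value of $s$. The essential new ingredient compared to the planar case is the availability of a \emph{direct} dual Furstenberg estimate in $\mathbb{R}^n$, namely Theorem~\ref{thm:higherdimFurstenberg}; in the plane one could pass through point-line duality and invoke Lutz--Stull, but that duality is unavailable in $\mathbb{R}^n$ for $n \geq 3$, so without the Bright--Fu--Ren bound the argument would stall precisely at the step of converting the ``many lines through many pins'' configuration into a lower bound for $\dim \mathcal{L}(X)$.
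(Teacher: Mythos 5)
Your proposal is correct and follows essentially the same route as the paper's own proof: part (1) via Ren's Theorem~\ref{thm:higherdimBeck}, and part (2) by combining Lemma~\ref{thm:kplanelemma}, Lemma~\ref{thm:linefamdim}, and the dual Furstenberg bound of Theorem~\ref{thm:higherdimFurstenberg} applied to the pin set $X\setminus H$. The only caveat (shared by the paper, which applies the lemma the same way) is that Lemma~\ref{thm:kplanelemma} is literally stated for $s\in[k,k+1)$ while here one only knows $s\le k$; its proof never uses that restriction, so the application is harmless, but your remark that $\dim X\le k$ ``places us inside the lemma's scope'' is not quite accurate as the lemma is written.
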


\begin{proof}
    Fix an integer $k\in [1,n-1]$. If it is the case that for all $k$-planes $H \in \mathcal A(n,k)$, we have $\dim (X\setminus H) = \dim X$, then by Theorem \ref{thm:higherdimBeck} from \cite{ren2023discretized}, we have that 
    \[
    \dim \mathcal L(X) \geq \min \{2\dim X, 2k\}.
    \]

    Otherwise, there exists an $H$ such that $\dim (X\setminus H) < \dim X$. Then, fix $0< t \leq \dim X$ such that $\dim (X\setminus P) \geq t$ for all $k$-planes $P\in \mathcal A(n,k)$. We now go into case work. Given there exists a $k$-plane $H\in \mathcal A(n,k)$ such that $\dim (X\setminus H) < \dim X$, we have that $\dim (X\cap H) = \dim X := s$. Therefore, by Lemma \ref{thm:kplanelemma}, we have that 
    \[
    \dim \pi_x(X\setminus \{x\}) \geq s, \quad \forall x \in X \setminus H.
    \] Hence, by Lemma \ref{thm:linefamdim}, for all $x\in X\setminus H$, the set of lines through $x$, $\mathcal L_x\subset \mathcal L(X)$, satisfies
    \[
    \dim \mathcal L_x = \dim \pi_x\left(X\setminus \{x\}\right) \geq s
    \]
    for all $x\in X\setminus H$.

    In total, we have that
    \[
    \mathcal L(X) \supset \bigcup_{x\in X\setminus H} \mathcal L_x,
    \]
    where $X\setminus H$ is non-empty, $\dim X\setminus H \geq t$, and $\dim \mathcal L_x \geq s$ for all $x\in X\setminus H$. By Theorem \ref{thm:higherdimFurstenberg}, this implies that 
    \[
    \dim \mathcal L(X) \geq \dim \left(\bigcup_{x\in X\setminus H} \mathcal L_x\right) \geq s+ \min \{t,s\} = s + t := \dim X + t.
    \]
\end{proof}

\begin{corollary}
    Let $X\subset \R^n$ be Borel and let $k\in [1,n-1]$ be an integer. Then, 
    \begin{enumerate}
        \item[\textbf{1)}] if $\dim (X\setminus H) = \dim X$ for every $H \in \mathcal A(n,k)$, then $\dim \mathcal L(X) \geq \min\{2\dim X, 2k\}$.
        \item[\textbf{2)}] if there exists an $H \in \mathcal A(n,k)$ such that $\dim (X\setminus H) < \dim X$, we let $0 < t \leq \dim X$ such that $\dim (X\setminus P) \geq t$ for all $P \in \mathcal A(n,k)$. Then,
        \[
        \dim \mathcal L(X) \geq \max\{\dim X + t, \min\{2\dim X,2(T(X)-1)\}\}.
        \]
    \end{enumerate}
\end{corollary}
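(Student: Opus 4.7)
The plan is to observe that the corollary is an immediate consequence of combining two independent lower bounds: the one furnished by Theorem \ref{thm:higherdimtheorem} (the main Erd\H{o}s--Beck-type result proven in this section) and Ren's continuum Beck theorem (Theorem \ref{thm:rensBeck}). Since both bounds hold simultaneously for any Borel set $X \subset \R^n$, the dimension $\dim \mathcal L(X)$ must be at least the maximum of the two, which is exactly the quantity appearing on the right-hand side of the claimed estimate.

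For part \textbf{1)}, the hypothesis $\dim(X \setminus H) = \dim X$ for every $H \in \mathcal A(n,k)$ is identical to the hypothesis of Theorem \ref{thm:higherdimtheorem} part \textbf{1)}, so that theorem gives the desired conclusion $\dim \mathcal L(X) \geq \min\{2\dim X, 2k\}$ directly.

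For part \textbf{2)}, Theorem \ref{thm:higherdimtheorem} part \textbf{2)} applies verbatim, yielding the first lower bound
\[
\dim \mathcal L(X) \geq \dim X + t.
\]
On the other hand, Theorem \ref{thm:rensBeck} applies to \emph{every} Borel set $X \subset \R^n$ with no assumption on $k$-planes, producing the second lower bound
\[
\dim \mathcal L(X) \geq \min\{2\dim X,\, 2(T(X)-1)\}.
\]
Since both inequalities hold, we may take the maximum of the two right-hand sides, which yields the claimed bound.

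The argument is a one-line combination, with no real obstacle: all of the substantive analytic work lives in Theorem \ref{thm:higherdimtheorem} (which invokes Lemma \ref{thm:kplanelemma}, Lemma \ref{thm:linefamdim}, and the dual Furstenberg estimate Theorem \ref{thm:higherdimFurstenberg}) and in the independent result Theorem \ref{thm:rensBeck} of Ren. The only subtle point worth flagging is the definition of $T(X)$: in the regime where $T(X) = 1$ the second bound is vacuous, and in the regime where $T(X) \geq k+1$ the hypothesis of part \textbf{2)} is incompatible (any witnessing hyperplane $H$ of part \textbf{2)} shows $T(X) \leq k$), so one should briefly note that the maximum in the statement is interpreted in the obvious way, i.e., as the larger of the two lower bounds, each of which is valid on its own.
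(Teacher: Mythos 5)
Your proposal is correct and matches the paper's own proof, which is precisely the one-line combination of Theorem \ref{thm:higherdimtheorem} with Ren's Theorem \ref{thm:rensBeck}, taking the maximum of the two lower bounds. Your added remarks on the edge cases of $T(X)$ are consistent with the paper's own observation that Ren's bound is trivial when $T(X)=1$.
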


\begin{proof}
    Compare Theorem \ref{thm:higherdimtheorem} with Theorem \ref{thm:rensBeck} (due to Ren) to obtain the result.
\end{proof}

\printbibliography

@article{beck1983lattice,
  title={On the lattice property of the plane and some problems of {D}irac, {M}otzkin and {E}rdős in combinatorial geometry}, 
    author={J\'{o}zsef Beck},
  journal={Combinatorica},
  year={1983},
  pages={281--297},
  volume={22}
}

@book{bishop_peres_2016, 
place={Cambridge}, 
series={Cambridge Studies in Advanced Mathematics}, 
title={Fractals in Probability and Analysis}, 
DOI={10.1017/9781316460238}, 
publisher={Cambridge University Press}, author={Bishop, Christopher J. and Peres, Yuval}, 
year={2016}, 
collection={Cambridge Studies in Advanced Mathematics}}

@misc{brightfuren2024radial,
      title={Radial projections in $\mathbb{R}^n$ revisited}, 
      author={Paige Bright and Yuqiu Fu and Kevin Ren},
      year={2024},
      archivePrefix={arXiv},
      primaryClass={math.CA}
}

@article{dvir2009kakeya,
title={On the size of Kakeya sets in finite fields},
author={Zeev Dvir},
journal={Journal of the American Mathematics Society},
volume={22},
pages={1093--1097},
year={2009}
}

@article{Hatano1971,
author = {Kaoru Hatano},
title = {{Notes on Hausdorff dimensions of Cartesian product sets}},
volume = {1},
journal = {Hiroshima Mathematical Journal},
publisher = {Hiroshima University, Mathematics Program},
pages = {17 -- 25},
year = {1971},
doi = {10.32917/hmj/1206138139}
}

@article{héra2020improved,
      title={An improved bound for the dimension of $(\alpha,2\alpha)$-Furstenberg sets},
      journal={Rev. Mat. Iberoam.},
      volume={38},
      pages = {295--322},
      author={Kornélia Héra and Pablo Shmerkin and Alexia Yavicoli},
      year={2020},
      doi={10.4171/rmi/1281}
}

@article{lutz2017bounding,
title = {Bounding the dimension of points on a line},
journal = {Information and Computation},
volume = {275},
year = {2020},
doi = {https://doi.org/10.1016/j.ic.2020.104601},
author = {Neil Lutz and D.M. Stull}
}

@book{mattila1999,
title = {Geometry of sets and measures in Euclidean spaces. Fractals and rectifiability. 1st paperback ed},
author = {Pertti Mattila},
year = {1999},
publisher = {Cambridge University Press},
address = {Cambridge}
}

@misc{orponen2022kaufman,
  title={{K}aufman and {F}alconer estimates for radial projections and a continuum version of {B}eck's Theorem},
  author={Orponen, Tuomas and Shmerkin, Pablo and Wang, Hong},
  journal={arXiv preprint arXiv:2209.00348},
  year={2022}
}

@misc{orponen2023ABC,
      title={Projections, {F}urstenberg sets, and the {$ABC$} sum-product problem}, 
      author={Tuomas Orponen and Pablo Shmerkin},
      year={2023},
      eprint={2301.10199},
      archivePrefix={arXiv},
      primaryClass={math.CA}
}

@misc{ren2023discretized,
      title={Discretized Radial Projections in $\mathbb{R}^d$}, 
      author={Kevin Ren},
      year={2023},
      eprint={2309.04097},
      archivePrefix={arXiv},
      primaryClass={math.CA}
}

@misc{ren2023furstenberg,
      title={Furstenberg sets estimate in the plane}, 
      author={Kevin Ren and Hong Wang},
      year={2023},
      eprint={2308.08819},
      archivePrefix={arXiv},
      primaryClass={math.CA}
}

@article{wolff1999recent,
  title={Recent work connected with the {K}akeya problem},
  author={Wolff, Thomas},
  journal={Prospects in Mathematics},
  pages={129-162},
  year={1999},
  publisher={Amer. Math. Soc., Providence, RI}
}

\end{document}